\newtheorem{theorem}{Theorem}
\newtheorem{remark}[theorem]{{\it Remark\/}}
\newtheorem{proposition}[theorem]{{\it Proposition\/}}
\newtheorem{definition}[theorem]{{\it Definition\/}}
\newenvironment{proof}{{\noindent \bf Proof:}}{\hfill $\fbox{}$ \vspace*{5mm}}
\newcommand{\ch}{{\cal H}}
\newcommand{\cl}{{\cal L}}
\newcommand{\diag}{{\rm diag}}
\newcommand{\imm}{\mathrm{i}}
\DeclareMathOperator*{\argmin}{argmin}
\begin{document}
\title{Fast Preconditioners for Total Variation Deblurring with Anti-Reflective Boundary Conditions}

\author{Zheng-Jian Bai\thanks{School of Mathematical Sciences, Xiamen
University, Xiamen 361005, People's Republic of China, Dipartimento
di Fisica e Matematica, Universit{\`{a}} dell'Insubria - Sede di
Como, Via Valleggio 11, 22100 Como, Italy, \textbf{E-mail:}
zjbai@xmu.edu.cn. The research of this author was partially supported by the Natural Science Foundation of Fujian Province of China for Distinguished Young Scholars (No. 2010J06002), NCETXMU, and SRF for ROCS, SEM, and
Internationalization Grant of U. Insubria 2008, 2009.} \and Marco Donatelli
\thanks{Dipartimento di Fisica e Matematica,
Universit{\`{a}} dell'Insubria - Sede di Como, Via Valleggio 11,
22100 Como, Italy, \textbf{E-mail:}
$\{$marco.donatelli,stefano.serrac$\}$@uninsubria.it.
The work of these authors was partially supported by MIUR, grant number 20083KLJEZ
and 2006017542.
}
\and Stefano Serra-Capizzano$\mbox{ }^{\dagger}$
}

\maketitle
\begin{abstract}
In recent works several authors have proposed the use of
precise boundary conditions (BCs) for blurring models and they
proved that the resulting choice (Neumann or reflective,
anti-reflective) leads to fast algorithms both for deblurring and
for detecting the regularization parameters in presence of noise.
When considering a symmetric point spread function, the
crucial fact is that such BCs are related to fast trigonometric
transforms.

In this paper we combine the use of precise BCs with the Total
Variation (TV) approach in order to preserve the jumps of the given
signal (edges of the given image) as much as possible. We consider a
classic fixed point method with a preconditioned Krylov method
(usually the conjugate gradient method) for the inner iteration.
Based on fast trigonometric transforms, we propose some preconditioning strategies
which are suitable for reflective and anti-reflective BCs.
A theoretical analysis motivates the choice of our preconditioners and
an extensive numerical experimentation is reported and critically discussed.
The latter shows that the TV regularization with anti-reflective BCs
implies not only a reduced analytical error, but also
a lower computational cost of the whole restoration procedure
over the other BCs.
\end{abstract}
\ \\
{\bf Keywords}: Sine algebra of type I ($\tau$ algebra),
reflective and anti-reflective BCs,
total variation, preconditioning.
\ \\
{\bf AMS-SC}: 65F10, 65F15, 65Y20.

\section{Introduction}\label{intro}

We are concerned with specific linear algebra/matrix theory
aspects of the vast field of inverse problems \cite{hanke,G93} which
model the blurring of signals and images ($2D$ or $dD$ with $d\ge
3$). Here the goal is to reconstruct the real object from its
blurred and noisy version and this goal is a classical one in
astronomical imaging, medical imaging, geosciences, etc.
\cite{bertero}.

The blurring model is assumed to be space-invariant, i.e., the point
spread function (PSF) is represented by a specific bivariate
function $h(x-y)$ ($x, y\in\Omega$) for some univariate function
$h(\cdot)$ \cite{nagybook}. According to the linear models
described in the literature \cite{G93}, the observed signal or image
$v$ and the original signal or image $u$ are described by the
relation
\begin{equation} \label {nbf}
v(x) = \ch u(x) + \eta(x):= \int_\Omega h(x-s)u(s)ds + \eta(x),
\quad x\in\Omega,
\end{equation}
where the kernel $h$ is the PSF and $\eta$ denotes the noise. The
problem (\ref{nbf}) is ill-posed since the operator $\ch$ is
compact \cite{G93}. Therefore, the approximation/discretization
matrix of $\ch$ is usually increasingly ill-conditioned when the
number $n$ of pixels becomes large. In addition, the size of the
subspace associated with small eigenvalues, which substantially
intersects the high frequencies, is large and proportional to the
size of the matrix. Thus, we cannot directly solve $\ch u = d$,
since the small perturbations, represented by the noise $\eta$ with
important high frequency components due to its probabilistic nature,
would be amplified unacceptably.

To remedy to the latter essential ill-conditioning of problem
(\ref{nbf}), one may employ regularization methods. The Total
Variation (TV) regularization approach is a good choice for
restoring edges of the original signals \cite{TV}.
Rudin, Osher, and Fatemi \cite{TV} gave the total variation functional
in the form
\begin{equation} \label {tv}
J_{TV}(u) := \int_\Omega \left|\nabla u\right|dx,
\end{equation}
where $|\cdot|$ denotes the Euclidean norm. We note that the
Euclidean norm $|\cdot|$ is not differentiable at zero. To avoid the
non-differentiability, Acar and Vogel \cite{AV94} considered the
following minimization
\begin{equation} \label {mp}
\min_u \left\{ \|\ch u-v\|_{L^2(\Omega)} + \alpha \int_\Omega
\sqrt{|\nabla u|^2+\beta^2}\, dx \right\},
\end{equation}
where $\alpha,\beta$ are positive parameters. Notice that the
penalty term
$\int_\Omega \sqrt{|\nabla u|^2+\beta^2}\, dx$
converges to $J_{TV}(u)$ as $\beta\to 0$. In
other words, the latter is a differentiable regularized version of
$J_{TV}(u)$. The corresponding Euler-Lagrange equation for
(\ref{mp}) is given by
\begin{equation} \label{el}
\left\{
\begin{array}{rl}
   g(u) := \ch^*(\ch u - v) -\alpha \,  \cl_u(v) = 0, &  x\in\Omega,\\[2mm]
  \frac{\partial u}{\partial n} = 0, & x\in\partial\Omega,
\end{array}
\right.
\end{equation}
where $*$ denotes the adjoint operator and
$
\cl_u(y):= - \nabla\cdot\left(\frac{1}{\sqrt{|\nabla
u|^2+\beta^2}}\nabla y\right)
$
is the differential operator appearing in (\ref{el}) and comes from
the regularized penalty term. Vogel and Oman \cite{VO96} proposed a
lagged diffusivity fixed point (FP) iteration for solving
(\ref{el}). More precisely, given the initial guess $u^0$, the new
iterate $u^{k+1}$ is obtained by $u^{k}$ thanks to the equation
\begin{equation} \label{fp}
A_{u^k}u^{k+1} \equiv\left(H^*H  + \alpha\, L(u^k)\right)u^{k+1} =
H^*v,\quad k =0,1,\ldots,
\end{equation}
where $H$ and $L(u^k)u^{k+1}$ denote the
discretization/approximation matrix of $\ch$ and
$\cl_{u^k}(u^{k+1})$, respectively. We use the compound mid point rule
and standard centered finite differences of precision order two for the finite dimensional approximation of
$\ch$ and $\cl_u(\cdot)$, respectively. Therefore, at each FP iteration,
we may use the preconditioned conjugate gradient (PCG) method \cite[Algorithm 10.3.1]{GV96} for
solving the linear system \eqref{fp}.

In \cite{CCW99} the authors proposed a cosine preconditioner when
$H$ is a Toeplitz matrix, i.e., in the case of zero-Dirichlet boundary conditions (BCs).
However, the choice of such BCs induces remarkable
pathologies in the quality of the restored images, which should be
avoided or at least minimized. In reality, using classical BCs such as periodic or zero-Dirichlet may imply, when the
background is not uniformly black, disturbing Gibbs phenomena called
ringing effects \cite{NCT,S03,nagybook}.

The novelty of this paper is represented by the choice of
appropriate BCs, in order to reduce the ringing effects, and in the related
matrix/numerical analysis. The latter
will affect the algebraic expression of $H$, while for $L(u^k)$ the
choice of the BCs in the Euler-Lagrange equation for (\ref{el})
seems to impose Neumann BCs. In this context, the idea is to combine
the application of anti-reflective BCs, already
studied for their precision with plain regularization methods like
Tikhonov and Landweber \cite{S03,perrone,Shi,DES06,ADNS09,ChHa08},
with the more sophisticate TV regularization. In other
words, the first aim consists in checking how to reduce the ringing effects and the over-smoothing of the
edges simultaneously. Next, we want to study the use of preconditioners based
on innovative fast transforms in the setting of Krylov methods when
the real problem is modeled by a symmetric PSF. The final goal is to
combine the precision of the reconstruction with highly efficient
numerical procedures. We study some preconditioning techniques and give the theoretical
explanations of different proposals. An effective preconditioner for the reflective
BCs is inspired by the work in \cite{CCW99}, while, for the
anti-reflective BCs, we propose a new sine preconditioner for the
linear system \eqref{fp} and explore the re-blurring approach
introduced in \cite{DS05}.
Numerical results confirm the effectiveness of the proposed preconditioners
and the superiority of the antireflective BCs with respect to the reflective BCs.
Indeed, antireflective BCs not only provide better restorations as expected
(see \cite{S03,DES06}), but also require a lower computational cost. The latter
is a consequence of the fact that a more precise model requires lesser
regularization, i.e. a smaller $\alpha$, and the convergence of our approach
is fast for small $\alpha$.

The paper is organized into seven more sections. In Section
\ref{BCs} we consider reflective and anti-reflective BCs. Sections
\ref{prec-refl} is devoted to define
optimal preconditioners for signal and image deblurring for the
reflective BCs and anti-reflective BCs.
In Section \ref{sect:spectr} some spectral features of the proposed
preconditioners are discussed. Section \ref{num2} is
concerned with the numerical tests for checking the real efficiency
of the considered preconditioners and the quality of the
restorations. Finally, in Section \ref{final} we draw conclusions.

\section{Boundary Conditions}\label{BCs}
We start by introducing the one-dimensional deblurring problem.
Consider the original signal $ \tilde u = (\ldots,
u_{-m+1},\ldots,u_0, u_1,\ldots, u_n,$ $u_{n+1}, \ldots, u_{n+m},
\ldots)^T$ and the normalized blurring PSF given \nolinebreak by

\begin{equation}\label{blur1d}
h = (\ldots, 0, 0, h_{-m} , h_{-m+1},\ldots , h_0 , \ldots ,
h_{m-1},h_m, 0,0,\ldots)^T,
\end{equation}
with the usual normalization, i.e., $\sum_{j=-m}^m h_j=1$ which
preserves the global intensity and therefore represents an average.
The blurred signal $v$ is the convolution of $h$ and $\tilde u$ and
consequently $v_i =\sum_{j=-\infty}^{\infty}h_j u_{i-j}$ is such that
\begin{equation}\label{blurred1d:matrixform}
v=\left(\begin{array}{cccccccccc}
h_m & \cdots & h_0 & \cdots & h_{-m}  \\
    & h_m    &     & h_0    &       & h_{-m} &   &   &  0  \\
    &        & \ddots & \ddots & \ddots & \ddots & \ddots \\
    &        &        & \ddots & \ddots & \ddots & \ddots & \ddots \\
    & 0      &        &        & h_m    &     & h_0    &       & h_{-m} \\
    &        &        &        &     & h_m & \cdots & h_0 & \cdots & h_{-m}
\end{array}
\right)
\left(\begin{array}{c}
u_{-m+1} \\
u_{-m+2} \\
\vdots  \\
u_0   \\
u   \\
u_{n+1} \\
\vdots \\
u_{n+m-1} \\
u_{n+m}
\end{array}
\right).
\end{equation}
The deblurring problem is to recover the vector $u=(u_1,\ldots,u_n)^T$
given the blurring function $h$ and a blurred signal
$v=(v_1,\ldots,v_n)^T$ of finite length,
Thus the blurred signal $v$ is determined not only by $u$, but also
by $(u_{-m+1},\ldots,u_0)^T$ and $(u_{n+1},\ldots,u_{n+m})^T$ and
the linear system (\ref{blurred1d:matrixform}) is underdetermined.
To overcome this, we make certain assumptions, that is the BCs on the unknown boundary data $u_{-m+1},\ldots,u_0$ and
$u_{n+1},\ldots,u_{n+m}$ in such a way that the number of unknowns
equals the number of equations.

For the \emph{zero-Dirichlet} BCs, we assume that
the data outside $u$ are zero, i.e., we set $u_{1-j}=u_{n+j}=0$ for
$j=1,\ldots,m$. Then, (\ref{blurred1d:matrixform})
becomes $Au = v$, where $A$ is Toeplitz.
For the \emph{periodic} BCs, we assume that the
signal $u$ is extended by periodicity. More precisely,  we set $u_{1-j}=u_{n-j+1}$
and $u_{n+j}=u_{j}$ for $j=1,\ldots,m$. It follows that (\ref{blurred1d:matrixform})
becomes $Au = v$, where $A$ is circulant and hence it can be diagonalized
by the Discrete Fourier Transform (DFT) (see \cite{nagybook}).

For the \emph{Neumann or reflective} BCs, we assume that
the data outside $u$ are a reflection of the data inside $u$ (refer
to \cite{NCT}). More precisely, we set $u_{1-j}=u_j$
and $u_{n+j}=u_{n+1-j}$ for all $j=1,\ldots,m$ in
(\ref{blurred1d:matrixform}). Thus (\ref{blurred1d:matrixform})
becomes $Au = v$, where $A$ is neither Toeplitz nor circulant but a
special $n$-by-$n$ Toeplitz plus Hankel matrix which is diagonalized
by the discrete cosine transform provided that the blurring function
$h$ is symmetric, i.e., $h_j = h_{-j}$ for all $j$ in
(\ref{blur1d}). It follows that the above system can be solved by
using three fast cosine transforms (FCTs) in $O(n \log n)$
operations \cite{NCT}. This approach is computationally
interesting since the FCT requires only real operations and is about
twice as fast as the FFT and this is true in two dimensions as well.
We note that the reflection ensures the continuity of the signal and
the error is linear in the discretization step (the latter can be
easily seen by applying a Taylor expansion \cite{NCT}). Therefore,
we usually observe a reduction of the boundary artifacts with
respect to zero-Dirichlet and periodic BCs.

For the \emph{anti-reflective} BCs, we assume that the data
outside $u$ are an anti-reflection of the data inside $u$. More
precisely, if $x$ is a point outside the domain and $x^*$ is the
closest boundary point, then we have $x=x^*-\delta x$ and the
quantity $u(x)$ is approximated by $u(x^*)-(u(x^*+\delta
x)-u(x^*))$. Consequently, we set
\begin{equation}\label{anti1d}
    \begin{array}{cc}
u_{1-j}=u_1-(u_{j+1}-u_1)=2u_1-u_{j+1}, & \ \ \ {\rm for\ all}\
j=1,\ldots,m,
\\
u_{n+j}=u_n-(u_{n-j}-u_n)=2u_n-u_{n-j}, & \ \ \ {\rm for\ all}\
j=1,\ldots,m
\end{array}
\end{equation}
in (\ref{blurred1d:matrixform}). By a Taylor expansion, the
anti-reflection \eqref{anti1d} ensures a $C^1$ continuity of the
signal and the error is quadratic in the discretization step
\cite{S03}. Usually, the boundary artifacts are reduced
also with respect to reflective BCs.

Imposing the anti-reflection \eqref{anti1d}, the linear system
\eqref{blurred1d:matrixform} becomes $Au=v$, where $A$ is a Toeplitz
plus Hankel plus a rank-2 correction matrix, where the correction is
placed at the first and the last column. Furthermore, in
\cite{ADNS09} the authors proved that if $h$ is symmetric then
$A=T_n \Lambda T_n^{-1}$ where
\[
T_n = \left[
        \begin{array}{ccc}
          1 & 0 & 0 \\
          p & S_{n-2} & Jp \\
          0 & 0 & 1 \\
        \end{array}
      \right],\quad T_n^{-1} = \left[
        \begin{array}{ccc}
          1 & 0 & 0 \\
         -S_{n-2}p & S_{n-2} & -S_{n-2}Jp \\
          0 & 0 & 1 \\
        \end{array}
      \right],
\]
where $J$ is the flip matrix, $S_{n-2}$ is the sine transform matrix of order $n-2$, and where
$p_j=1-j/(n-1)$ so that the first column vector is exactly the sampling of
the function $1-x$ on the grid $j/(n-1)$ for $j=0,\ldots,n-1$.
Finally, $\Lambda$ is a diagonal matrix given by suitable samplings of the
function
\begin{equation}\label{eq:symb}
    \hat h(y)=\sum h_j {\rm exp}(\imm jy),
\end{equation}
which is the symbol generated by the PSF. That is,
\begin{equation}\label{eq:lambda}
    \Lambda = \diag_{y=1,\dots,n}\left(\hat{h}(y_j)\right), \quad
    \quad S_m = \sqrt{\frac{2}{m+1}}\left(
\sin\left(\frac{ji\pi}{m+1}\right)\right)_{i,j=1}^{m},
\end{equation}
where
\begin{equation}\label{eq:gridy}
    y_j=\frac{(j-1)\pi}{n-1},  \mbox{ for } j=1,\dots,n-1, \qquad \mbox{and} \qquad y_n=0.
\end{equation}
As a consequence, a generic system $A u=v$ can be solved within $O(n
\log n)$ real operations by resorting to the application of three
fast sine transforms (FSTs) (refer to \cite{S03}), where each FST is
computationally as cheap as a generic FCT.
%
There is a suggestive functional interpretation of the transform
$T_n$. The transform associated with periodic BCs matrices is the Fourier transform: its $j$-th column vector, up
to a normalizing scalar factor, can be viewed as a sampling, over a
suitable uniform gridding of $[0,2\pi]$, of the frequency function
${\rm exp}(-{\bf i} jy)$. Analogously, when imposing reflective BCs
with a strongly symmetric PSF, the transform of the related reflective BCs
matrices is the cosine transform: its $j$-th column vector, up to a
normalizing scalar factor, can be viewed as a sampling, over a
suitable uniform gridding of $[0,\pi]$, of the frequency function
$\cos(jy)$. Here the imposition of the anti-reflective BCs, by
operating a central symmetry with respect to any point of frontier,
can be functionally interpreted as a linear combination of sine
functions and of linear polynomials (whose use is exactly required
for imposing the $C^1$ continuity at the borders). This intuition
becomes evident in the expression of $T_n$. Indeed
\begin{equation}\label{eq:Tn}
    T_n=\left(\,1-\frac{y}{\pi}\,,\,\sin(y)\,,\,\cdots\,,\,
\sin((n-2)y)\,,\,\frac{y}{\pi}\,\right)\ \cdot\
{\rm diag}\left(1\,,\,\sqrt{\frac{2}{n-1}}\,I_{n-2}\,,\,1\right),
\end{equation}
where $y$ is defined \eqref{eq:gridy} and it is a suitable gridding of $[0, \pi]$.

Now, we introduce the two-dimensional case. For the Neumann or
reflective BCs, the blurring matrix is a block Toeplitz-plus-Hankel matrix with
Toeplitz-plus-Hankel blocks and can be diagonalized by the
two-dimensional FCTs (which are tensor products of one-dimensional
FCTs) in $O(n^2\log n)$ operations provided that $h$ is quadrantally
symmetric, i.e., $h_{i,j}=h_{-i,j}=h_{i,-j}=h_{-i,-j}$ (refer to
\cite{NCT}).

For the anti-reflective BCs, we assume that the data
outside $u$ are an anti-reflection of the data inside $u$, i.e., a point outside the domain is anti-reflective to the closest boundary point first in one direction and then in the other direction. In particular,  we set
\[
\begin{array}{ccc}
u_{1-j,\phi}=2u_{1,\phi}-u_{j+1,\phi}, & u_{n+j,\phi}=2u_{n,\phi}-u_{n-j,\phi}, & \  {\rm for}\
1\le j\le m, 1\le\phi \le n,
\\
u_{\psi,1-j,}=2u_{\psi,1}-u_{\psi,j+1}, & u_{\psi, n+j}=2u_{\psi,n}-u_{\psi,n-j}, & \ {\rm for}\
1\le j\le m, 1\le \psi \le n.
\end{array}
\]
When both indices lie outside the range $\{1,\ldots, n\}$ (this
happens close to the $4$ corners of the given image), we set
\[
\begin{array}{cc}
u_{1-i,1-j} = 4 u_{1,1} - 2u_{1,j+1} -2u_{i+1,1} + u_{i+1,j+1}, \\
u_{1-i,n+j} = 4 u_{1,n} - 2u_{1,n-j} -2u_{i+1,n} + u_{i+1,n-j}, \\
u_{n+i,1-j} = 4 u_{n,1} - 2u_{n,j+1} -2u_{n-i,1} + u_{n-i,j+1}, \\
u_{n+i,n+j} = 4 u_{n,n} - 2u_{n,n-j} -2u_{n-i,n} + u_{n-i,n-j},
\end{array}
\]
for $1\leq i,j \leq m$. If  the blurring function (PSF) $h$ is
quadrantally symmetric, then the blurring matrix is a block
Toeplitz-plus-Hankel-plus-2-rank-correction matrix with
Toeplitz-plus-Hankel-plus-2-rank-correction blocks and can be
diagonalized by the two-dimensional anti-reflective transforms
(which are tensor products of one-dimensional anti-reflective
transforms $T_n$) in $O(n^2 \log n)$ real operations (see for
instance \cite{ADNS09}).
In the following we will assume a symmetric (quadrantally symmetric
in 2D) PSF since reflective and anti-reflective BCs can be
diagonalized by fast transforms only in such case. However, in the
nonsymmetric case, even if the blurring matrix can not be
diagonalized by fast transforms, the matrix-vector product can be
done again in $O(n^2\log n)$ by FFTs. Moreover, many practical blur
have the symmetry like the celebrated Gaussian blur widely used in
several contexts.

\section{Optimal Preconditioners with different Boundary Conditions}\label{prec-refl}
The optimal preconditioner for a matrix $A$ aims to find an approximation
which minimizes $\|B-A\|_F$ over all $B$ in a set of matrices for the matrix Frobenius
norm $\|\cdot\|_F$: the typical set of matrices is formed by considering an algebra of
matrices which are simultaneously diagonalized by a given unitary transform.
The main novelty in our context is represented by the fact that the anti-reflective matrices with
symmetric PSFs form a commutative algebra associated to a non-unitary transform. The latter
poses nontrivial difficulties that are treated in the sequel of the paper.
The optimal circulant preconditioner was originally given in \cite{TC88}.
The optimal sine transform preconditioner was presented in \cite{CNW96}.
The optimal cosine transform preconditioner was provided in \cite{CCW95}.
In this section, we construct the optimal reflective BCs preconditioner
and the optimal anti-reflective BCs preconditioner for (\ref{fp})  and the optimal reblurring preconditioner for the reblurring equation (\ref{fpp})  below instead of (\ref{fp}).  Some of these preconditioning techniques are inspired from the idea proposed in \cite{CCW95,CCW99} for zero-Dirichlet BCs.

\subsection{One-dimensional Problems}\label{sec:cos1d}
For the one-dimensional problems, we assume a symmetric and normalized PSF. Suppose that we impose the reflective BCs on $H$ and the zero Neumann BCs on $L(u^k)$. In this case, we propose the following reflective BCs preconditioners for (\ref{fp}).  Let $C_n$ be the $n$ dimensional discrete cosine transform  with entries
\[
[C_n]_{i,j} = \sqrt{\frac{2-\delta_{j1}}{n}} \cos\left(\frac{(2i-1)(j-1)\pi}{2n}\right),\quad i,j=1,\ldots,n,
\]
where $\delta_{ij}$ is the Kronecker delta. The matrix $C_n$ is
orthogonal, i.e., $C_nC_n^T = I$.  Moreover, for any $n$-vector $w$, the matrix-vector
product $C_nw$ can be computed within
$O(n\log n)$ real operations by the FCT.
Define
\begin{equation*}\label{set:cn}
{\cal C} = \{C_n^T\Lambda C_n: \, \Lambda \mbox{ is a real $n$-by-$n$ diagonal matrix}\}.
\end{equation*}
For an $n$-by-$n$ matrix $A$, the optimal cosine transform preconditioner is
\[c(A)= \argmin_{B \in \, {\cal C}}\|B-A\|_F.\]
The operator $c(\cdot)$ is linear,
preserves positive definiteness, and compresses every unitarily invariant norm. For the specific
cosine algebra and a general convergence theory
based on the Korovkin theorems, we refer to \cite{CCW99,koro1}.
As in \cite{CCW99} for Dirichlet BCs, the optimal cosine transform preconditioner (i.e., the optimal reflective BCs preconditioner) for (\ref{fp}) can be defined as
\begin{equation}\label{eq:R}
    R=H^*H+\alpha\, c(L(u^k)).
\end{equation}
We note that $R = c(A_{u^k})$ since $H^*=H \in {\cal C}$.
Spectral properties of the preconditioner will be discussed in Section \ref{sect:spectr}.
Here, we only note that $c(L(u^k))$ is not an optimal preconditioner for $L(u^k)$
if the coefficient $(|\nabla u|^2+\beta^2)^{-1/2}$ has large variation.
In such case a diagonal scaling is necessary to obtain an effective preconditioner
like $\diag(L(u^k))^{1/2}c(L(u^k))\diag(L(u^k))^{1/2}$, where
$\diag(L(u^k))$ is the diagonal matrix whose diagonal entries are the same as that of $L(u^k)$ \cite{glt-vs-fourier}.
We note that the coefficient matrix in (\ref{fp}) is the sum of two operators. To avoid the possibly large fluctuation in the coefficient of the operator in (\ref{fp}),  we  define a reflective BCs preconditioner for  (\ref{fp}) by
$
D_R = D^\frac{1}{2}R D^\frac{1}{2},
$
where $R$ is given in (\ref{eq:R}) and
\begin{equation}\label{eq:D}
D \equiv I + \alpha\, \diag(L(u^k)).
\end{equation}
A further possibility is to employ a diagonal scaling for \eqref{fp}. As in \cite{CCW99}, we concern the scaled equation
\begin{equation}\label{eq:scalesys}
        \tilde{A}_{u^k}\tilde{u}^{k+1} \equiv\left(\tilde{H}^*\tilde{H}  +
\alpha \, \tilde{L}(u^k)\right)\tilde{u}^{k+1} = \tilde{H}^*v,
\end{equation}
where $\tilde{H}=HD^{-1/2}$, $\tilde{L}(u^k)=
D^{-1/2}L(u^k)D^{-1/2}$,  and
$\tilde{u}^{k}=D^{1/2}{u}^{k}$. Then, we propose the following reflective BCs preconditioner for (\ref{eq:scalesys})
$
R_D = \hat{H}^*\hat{H} + \alpha \, c(\tilde{L}(u^k)),
$
where $\hat{H}= H c(D^{-1/2})$. If $\Lambda_H$, $\Lambda_D$
and  $\Lambda_{\tilde{L}}$ denote the eigenvalue matrices  of $H$,
$c(D^{-1/2})$ and $c(\tilde{L}(u^k))$, respectively, then $R_D$ can be written as
\[
R_D=C_n^T(\Lambda_H^*\Lambda_H\Lambda_D^*\Lambda_D + \alpha\, \Lambda_{\tilde{L}})C_n.
\]
Next, we construct the anti-reflective BCs preconditioners for (\ref{fp}) under the anti-reflective BCs for $H$ and the Neumann BCs or anti-reflective BCs for $L(u^k)$.
Let $S_n$ be the $n$ dimensional discrete sine transform of type I with entries as in (\ref{eq:lambda}).
Then, $S_n$ is orthogonal and symmetric, i.e., $S_n^T=S_n$
and $S_n^2 = I$. Moreover, for any $n$ dimensional vector $w$, the
matrix-vector product $S_nw$ can be computed in  $O(n\log n)$ real
operations by the FST. Define
$
\tau = \{S_n\Lambda S_n: \, \Lambda \mbox{ is a real diagonal matrix of order $n$}\}.
$
Let $\sigma(z):= (z_2,\ldots,z_n,0)^T$ with $z=(z_1,\ldots,z_n)^T$.
Let $\mathcal{T}(z)$  be the $n$-by-$n$ symmetric Toeplitz matrix whose first column is $z$
and $\mathcal{H}(z,Jz)$  be the $n$-by-$n$ Hankel matrix whose first and last column
are $z$ and $Jz$, respectively. It was shown that for any $B\in\tau$,
there exists $z=(z_1,\ldots,z_n)^T\in \mathbb{R}^n$ such that \cite{CNW96}
$
B=\mathcal{T}(z) - \mathcal{H}(\sigma^2(z),J\sigma^2(z)).
$
For an $n$-by-$n$ matrix $A$, the optimal sine preconditioner is
\begin{equation}\label{eq:s}
    s(A) = \argmin_{B\in\tau}\|B-A\|_F.
\end{equation}
The construction of $s(A)$ requires only $O(n^2)$ operation for a general matrix $A$ and $O(n)$ operation for a banded matrix $A$.
Furthermore, $s(\cdot)$ is linear,
preserves positive definiteness, and compresses any unitarily invariant norm (see \cite{CNW96,koro1}).

Now, we define an optimal sine transform based  preconditioner (i.e., the so-called anti-reflective BCs preconditioner) for (\ref{fp}) by
\begin{equation}\label{eq:M}
    M=\hat{s}(H)^*\hat{s}(H)+\alpha\, \hat{s}(L(u^k))
\end{equation}
in the sense that, for any $n$-by-$n$ matrix $A$, $\hat{s}(A)$ is given by
\begin{equation}\label{eq:hats}
    \hat{s}(A) = \argmin_{B\in\hat{\tau}}\|B-A\|_F,
\end{equation}
where
$
\hat{\tau} = \left\{\hat{S}_n\Lambda \hat{S}_n: \, \mbox{$\Lambda$ is a real diagonal matrix of order $n$ and $\hat{S}_n := \diag(1,S_{n-2},1)$}\right\}.
$
\begin{proposition}
Given an $n$-by-$n$ matrix $A$, we have
\[
\begin{array}{l}
\hat{s}(A) =\left[
                \begin{array}{ccc}
                  A(1,1) & 0 & 0 \\
                  0 & s(A(2:n-1,2:n-1)) & 0 \\
                  0 & 0 & A(n,n)
                \end{array}
              \right].
\end{array}
\]
where  $\hat{s}(\cdot)$ and $s(\cdot)$ are defined in \eqref{eq:hats} and \eqref{eq:s},
respectively, and $A(2:n-1,2:n-1)$ is the submatrix of $A$ corresponding to rows indexed from $2$ to $n-1$ and columns from $2$ to $n-1$.
\end{proposition}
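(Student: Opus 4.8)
The plan is to compute the Frobenius-norm minimization defining $\hat s(A)$ directly, exploiting the block structure of the admissible set $\hat\tau$. Any $B\in\hat\tau$ has the form $B=\hat S_n\Lambda\hat S_n$ with $\hat S_n=\diag(1,S_{n-2},1)$ and $\Lambda=\diag(\lambda_1,\dots,\lambda_n)$ real. Since $\hat S_n$ is block-diagonal with blocks $1$, $S_{n-2}$, $1$, the product $\hat S_n\Lambda\hat S_n$ is itself block-diagonal with the same partition $\{1\}\cup\{2,\dots,n-1\}\cup\{n\}$: its $(1,1)$ entry is $\lambda_1$, its $(n,n)$ entry is $\lambda_n$, its central $(n-2)\times(n-2)$ block is $S_{n-2}\,\diag(\lambda_2,\dots,\lambda_{n-1})\,S_{n-2}$, which is a generic element of the sine algebra $\tau$ of order $n-2$, and all other entries are zero.

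First I would write $\|B-A\|_F^2$ and split it, using the orthogonal decomposition of $\mathbb R^{n\times n}$ into the blocks determined by the partition above, together with the off-block part. Concretely,
\[
\|B-A\|_F^2 = \bigl(\lambda_1-A(1,1)\bigr)^2 + \bigl(\lambda_n-A(n,n)\bigr)^2 + \bigl\|C - A(2{:}n-1,2{:}n-1)\bigr\|_F^2 + \sum_{(i,j)\in\text{off-block}} A(i,j)^2,
\]
where $C:=S_{n-2}\,\diag(\lambda_2,\dots,\lambda_{n-1})\,S_{n-2}$ ranges over all of $\tau_{n-2}$ as $(\lambda_2,\dots,\lambda_{n-1})$ ranges over $\mathbb R^{n-2}$, and the last sum is a constant independent of $B$. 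The four terms are decoupled: the first two are minimized (to zero) by $\lambda_1=A(1,1)$ and $\lambda_n=A(n,n)$; the third is, by the very definition \eqref{eq:s} of the optimal sine preconditioner $s(\cdot)$, minimized by $C=s\bigl(A(2{:}n-1,2{:}n-1)\bigr)$; the last is untouched. Assembling the minimizing $B$ from these optimal blocks gives exactly the claimed matrix, and I would also remark that the minimizer is unique because each of the first three terms is a strictly convex quadratic in its own parameters (orthogonality of $S_{n-2}$, i.e. $S_{n-2}^2=I$, guarantees that distinct $(\lambda_2,\dots,\lambda_{n-1})$ give distinct $C$).

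The only mildly delicate point — and the one I would be careful to state explicitly rather than wave at — is the observation that $\hat S_n\Lambda\hat S_n$ is block-diagonal with the stated partition and that its central block sweeps out all of $\tau_{n-2}$; once that structural fact is in place the argument is just the elementary principle that a sum of squares over independent groups of variables is minimized groupwise. I expect no real obstacle here: the result is essentially bookkeeping about how the fixed border rows/columns of $\hat S_n$ interact with a diagonal matrix, and everything reduces to the already-cited characterization of $s(\cdot)$ in \cite{CNW96}.
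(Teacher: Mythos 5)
Your proof is correct and rests on the same structural fact as the paper's: since $\hat S_n=\diag(1,S_{n-2},1)$ is block-diagonal and orthogonal, every $\hat S_n\Lambda\hat S_n$ is block-diagonal with partition $\{1\}\cup\{2,\dots,n-1\}\cup\{n\}$, so the Frobenius minimization decouples over the two corner entries and the central block. The only difference is presentational — the paper conjugates first, writing $\|A-\hat S_n\Lambda\hat S_n\|_F=\|\hat S_nA\hat S_n-\Lambda\|_F$, extracts $\diag(\hat S_nA\hat S_n)$, and re-identifies the middle block via the identity $s(A)=S_n\diag(S_nAS_n)S_n$, whereas you split the norm blockwise in the original coordinates and invoke only the definition of $s(\cdot)$ as the minimizer over $\tau_{n-2}$; both routes are equally valid and essentially the same computation.
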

\begin{proof}
By unitary invariance of the Frobenius norm ($\hat{S}_n$ is unitary) we find $\|A-\hat{S}_n\Lambda\hat{S}_n\|_F = \|\hat{S}_nA\hat{S}_n-\Lambda\|_F$,
where $\Lambda$ is a diagonal matrix.
To conclude the proof, it is enough to observe that
\[
\diag(\hat{S}_n A \hat{S}_n) =
\left[
                \begin{array}{ccc}
                  A(1,1) & 0 & 0 \\
                  0 & \diag(S_{n-2}A(2:n-1,2:n-1)S_{n-2}) & 0 \\
                  0 & 0 & A(n,n)
                \end{array}
              \right].
\]
and that $s(A)=S_n\diag(S_nAS_n)S_n$.
\end{proof}

To reduce the potential fluctuations in the coefficient of the elliptic operator in
(\ref{fp}), based on the diagonal scaling $D$ in \eqref{eq:D}, we define a scaled anti-reflective BCs preconditioner for (\ref{fp}) by
$
D_M = D^{\frac{1}{2}}MD^{\frac{1}{2}},
$
where $M$ is defined in \eqref{eq:M}. Similarly, for the scaled equation in the form of \eqref{eq:scalesys},
we give the anti-reflective BCs preconditioner with diagonal scaling as follows
$
M_D = \hat{H}^*\hat{H} + \alpha \, \hat{s}(\tilde{L}(u^k)),
$
where $\hat{H}= \hat{s}(H)\hat{s}(D^{-1/2})$. If $\Lambda_H$,
$\Lambda_D$ and  $\Lambda_{\tilde{L}}$ denote the eigenvalue
matrices  of $\hat{s}(H)$, $\hat{s}(D^{-1/2})$ and
$\hat{s}(\tilde{L}(u^k))$, respectively, then $M_D$ can be written as
\[
M_D=\hat{S}_n(\Lambda_H^*\Lambda_H\Lambda_D^*\Lambda_D
+ \alpha\,  \Lambda_{\tilde{L}})\hat{S}_n.
\]

Finally, we consider the reblurring method with some new reblurring preconditioners under the anti-reflective BCs for $H$ and the Neumann BCs for $L(u^k)$. In Section \ref{BCs}, we have observed that anti-reflective BCs matrices can be diagonalized by the anti-reflective transform $T_n$. Hence, it is possible to define the anti-reflective
algebra
\begin{equation}\label{eq:ARalg}
    {\cal AR} = \{T_n\Lambda T_n^{-1}: \, \Lambda \mbox{ is a real diagonal matrix of order $n$}\}.
\end{equation}
Unfortunately, $H\in {\cal AR}$ but $H^* \not\in {\cal AR}$. However, in \cite{DES06}, it
was proposed to use a reblurring approach, i.e., to replace $H^*$ with $H^\prime$,
where $H^\prime$ is the matrix obtained by imposing anti-reflective
BCs to the  PSF rotated by 180 degrees.
Since the PSF is assumed to be symmetric, $H^\prime=H$ \cite{DS05}.
Therefore, instead of (\ref{fp}), one may solve the following equation {\rm
\cite{DES06}}
\begin{equation} \label{fpp}
A^\prime_{u^k}u^{k+1} \equiv\left(H^\prime H  + \alpha\,
L(u^k)\right)u^{k+1} =  H^\prime v,\quad k =0,1,\ldots
\end{equation}
by the PBiCGstab method \cite{V92} since $A^\prime_{u^k}$ is not symmetric.
In this case, a reblurring preconditioner for (\ref{fpp}) is given by
\[
P=H^\prime H+\alpha\, AR(L(u^k)).
\]
Here, for any $n$-by-$n$ matrix $A$, $AR(A)$ is defined by
\[
AR(A) :=\left[
                \begin{array}{ccccc}
                  z_1 + 2\sum_{k=2}^{n-2}z_k & 0 & \cdots & 0 & 0 \\
                  z_2 + 2\sum_{k=3}^{n-2}z_k & & &  & 0 \\
                  \vdots & & & & z_{n-2} \\
                  z_{n-3} + 2z_{n-2} & s(A(2:n-1,2:n-1)) & & & z_{n-3} + 2z_{n-2} \\
                  z_{n-2} & & & & \vdots \\
                  0 & & & & z_2 + 2\sum_{k=3}^{n-2}z_k \\
                  0 & 0 & \cdots & 0 & z_1 + 2\sum_{k=2}^{n-2}z_k
                \end{array}
              \right],
\]
where $z =(z_1,z_2,\ldots,z_{n-2})^T$ is such that
$
s(A(2:n-1,2:n-1)) =  \mathcal{T}(z) -\mathcal{H}(\sigma^2(z),J\sigma^2(z)).
$
We  only need form $s(A(2:n-1,2:n-1))$ for computing $AR(A)$.

We note that $AR(A)$ belongs to the algebra $\mathcal{AR}$ defined in \eqref{eq:ARalg}, where
$\Lambda$ is defined as in \eqref{eq:lambda}.
Therefore, a linear system $A u=v$ can be solved within $O(n \log n)$
real operations by using three FSTs.
\begin{remark}
In general, $AR(A) \neq \argmin_{B\in{\cal AR}}\|B-A\|_F$.
Moreover, we can not construct $\argmin_{B\in{\cal AR}}\|B-A\|_F$ in only $O(n^2)$ operations
by using the similar technique for computing $s(A)$ for a general matrix $A$ in {\rm \cite{CCW95}}.
Notice that $T_n$ in \eqref{eq:Tn} is $T_n=\hat{S}_n(I+U)$ and $T_n^{-1}=(I-U)\hat{S}_n$, where
\[
U = \left(
      \begin{array}{ccc}
        0& 0 & 0 \\
        S_{n-2}p & 0 & S_{n-2}Jp  \\
        0 & 0 & 0
      \end{array}
    \right).
\]
As in {\rm \cite{H92}}, we can compute the eigenvalue of $\mathcal{AR}(A)$ by using the diagonal entries of
$
\Psi = \hat{S}_n A \hat{S}_n,
$
However, it requires $O(n^2 \log n)$ operations to calculate the diagonal entries of $\Psi$.
\end{remark}

To reduce the potential fluctuations in the coefficient of the elliptic operator in
(\ref{fpp}), we define a diagonally scaled reblurring preconditioner for (\ref{fpp}) as follows
$
D_P=D^{1/2}PD^{1/2},
$
where $D$ is defined as the same form in (\ref{eq:D}).
For the scaled system
\begin{equation}\label{eq:scalereb}
        \tilde{A}^\prime_{u^k}\tilde{u}^{k+1} \equiv\left(\tilde{H}^\prime\tilde{H}  +
\alpha \, \tilde{L}(u^k)\right)\tilde{u}^{k+1} = \tilde{H}^\prime v,
\end{equation}
the reblurring preconditioned is given by
\[
P_D = AR(D^{-1/2}) \, {H}^\prime H\, AR(D^{-1/2}) + \alpha \, AR(\tilde{L}(u^k)).
\]
If $\Lambda_H$, $\Lambda_D$, and
$\Lambda_{\tilde{L}}$ denote  the eigenvalue matrices  of $H$,
$AR(D^{-1/2})$, and $AR(\tilde{L}(u^k))$,
respectively, then the preconditioner $P_D$ can be written as
\[
P_D=T_n(\Lambda_{H}^*\Lambda_H\Lambda_D^*\Lambda_D
+ \alpha\, \Lambda_L)T_n^{-1}.
\]

A further possibility is the use of anti-reflective BCs for $L(u^k)$. This implies that the coefficient matrix in the linear equation (\ref{fpp}) is closer to the preconditioner. Consequently, a faster convergence
and a lower global cost have to be expected. The latter choice is in fact considered in the numerics.

We comment on the cost of constructing $X_D$, $X\in\{R,M,P\}$  and of
each PCG/ PBiCGstab iteration.  We note that $L(u^k)$ is a banded matrix. Therefore,
computing $c(L(u^k))$, $\hat{s}(L(u^k))$, and $AR(L(u^k))$ needs only $O(n)$ operations \cite{CCW95,CNW96}.  At each PCG/ PBiCGstab iteration, we need to calculate the matrix-vector
product $\tilde{A}_{u^k}w$ and $\tilde{A}'_{u^k}w$
and solve the system $X_D y =b$. The vector
multiplication $D^{-1/2}w$ can be computed in $O(n)$ operations
since $D^{-1/2}$ is a diagonal matrix. $L(u^k)w$ can be done in $O(n)$ operations. For $H\in {\cal C}$ or $H\in {\cal AR}$, $Hw$, $H^*Hw$, and $H'Hw$ can be calculated in $O(n\log n)$ operations by few FCTs or FSTs plus lower order of computations. The system $X_D y = b$ can also be solved in $O(n\log n)$ operations. Therefore, the total cost of each PCG/ PBiCGstab
iteration is bounded by $O(n\log n)$.
\subsection{Two-dimensional Problems}\label{sec:cos2d}
We can extend the results in Subsection \ref{sec:cos1d} to two-dimensional image deblurring problems with different BCs. In the two-dimensional case, we assume that the PSF is quadrantally symmetric and normalized. When one imposes the reflective BCs on $H$ and the zero Neumann BCs on $L(u^k)$, the blurring matrix $H$ is  a block Toeplitz-plus-Hankel matrix with Toeplitz-plus-Hankel blocks, which can be diagonalized by the
two-dimensional FCTs  in $O(n^2\log n)$ operations \cite{NCT}.
For an $n^2$-by-$n^2$ matrix $A$ in the form of
\begin{equation}\label{eq:blockA}
    A= \left(
     \begin{array}{cccc}
       A_{1,1} & A_{1,2} & \cdots & A_{1,n} \\
       A_{2,1} & A_{2,2} & \cdots & A_{2,n} \\
       \vdots & \ddots & \ddots & \vdots \\
       A_{n,1} & A_{n,2} & \cdots & A_{n,n}
     \end{array}
   \right),
\end{equation}
where $A_{i,j}$ are $n$-by-$n$ matrices, as defined in \cite{CCW99}, the Level-1 cosine transform preconditioner $c_1(A)$ is given by
\[
c_1(A)= \left(
     \begin{array}{cccc}
       c(A_{1,1}) & c(A_{1,2}) & \cdots & c(A_{1,n}) \\
       c(A_{2,1}) & c(A_{2,2}) & \cdots & c(A_{2,n}) \\
       \vdots & \ddots & \ddots & \vdots \\
       c(A_{n,1}) & c(A_{n,2}) & \cdots & c(A_{n,n})
     \end{array}
   \right),
\]
and then the Level-2 cosine transform preconditioner is $c_2(A)=Qc_1(Q^Tc_1(A)Q)Q^T$, where $Q$ be the permutation matrix which satisfies
$[Q^TAQ]_{i,j;k,l} = [A]_{k,l;i,j}$ for $1\le i,j \le n$ and $1\le k,l \le n$,
i.e., the $(i,j)$th entry of the $(k,l)$th block of $A$ is permuted to the $(k,l)$th  entry of the $(i,j)$th block.

For the two-dimensional linear equation (\ref{fp}), using $c_2(H) = H$, we define the optimal reflective BCs preconditioner for $A_{u^k}$ in (\ref{fp}) by
\begin{equation}\label{eq:R2}
    R=H^*H+\alpha\, c_2(L(u^k)).
\end{equation}

For eliminating the possibility of large variations in the coefficient of the elliptic operator
in (\ref{fp}),  we employ the same strategy as in Section \ref{sec:cos1d} by the diagonal scaling  in \eqref{eq:D}.
Therefore, the  scaled reflective BCs preconditioner is given by
\begin{equation}\label{eq:DR2}
D_R = D^\frac{1}{2}R D^\frac{1}{2}.
\end{equation}
where $R$ is defined in \eqref{eq:R2}.
Similarly, for the scaled system in  \eqref{eq:scalesys},
the reflective BCs preconditioner is given by
$
R_D = \hat{H}^*\hat{H} + \alpha \, c_2(\tilde{L}(u^k)),
$
where $\hat{H}= H c_2(D^{-1/2})$. Let $\Lambda_H$, $\Lambda_D$
and  $\Lambda_{\tilde{L}}$ denote the eigenvalue matrices  of $H$,
$c_2(D^{-1/2})$ and $c_2(\tilde{L}(u^k))$, respectively.
The preconditioner $R_{D}$ in (\ref{eq:DR2}) can be written as
\[
R_D=(C_n\otimes C_n)^T(\Lambda_H^*\Lambda_H\Lambda_D^*\Lambda_D + \alpha\, \Lambda_{\tilde{L}})(C_n\otimes C_n),
\]
and hence it is easily inverted by employing few FCTs in $O(n^2\log n)$ operations.

Next, we assume the anti-reflective BCs for $H$ and the Neumann BCs or anti-reflective BCs for $L(u^k)$. Then, we construct the anti-reflective BCs preconditioners for (\ref{fp}). For an $n^2$-by-$n^2$ matrix $A$ in \eqref{eq:blockA}, the Level-1 sine-based transform preconditioner $\hat{s}_1(A)$ is given by
\[
\hat{s}_1(A)= \left(
     \begin{array}{cccc}
       \hat{s}(A_{1,1}) & \hat{s}(A_{1,2}) & \cdots & \hat{s}(A_{1,n}) \\
       \hat{s}(A_{2,1}) & \hat{s}(A_{2,2}) & \cdots & \hat{s}(A_{2,n}) \\
       \vdots & \ddots & \ddots & \vdots \\
       \hat{s}(A_{n,1}) & \hat{s}(A_{n,2}) & \cdots & \hat{s}(A_{n,n})
     \end{array}
   \right).
\]
By using the same proof technique of Theorem 3.3 in \cite{NCT}, we can easily show that  the Level-2 sine-based transform preconditioner $\hat{s}_2(A)$ is given by $\hat{s}_2(A) =Q\hat{s}_1(Q^T\hat{s}_1(A)Q)Q^T$.
Notice that the matrix $H$ is the  anti-reflective BCs matrix. Then, we design the  sine-based transform preconditioner for (\ref{fp}) by
\[
M=\hat{s}_2(H)^*\hat{s}_2(H)+\alpha\, \hat{s}_2(L(u^k)).
\]
By employing the diagonal scaling in \eqref{eq:D}, we define the  scaled anti-reflective BCs preconditioner $D_M = D^{1/2}MD^{1/2}$ for  \eqref{fp}
and the anti-reflective BCs preconditioner
$
M_D = \hat{H}^*\hat{H} + \alpha \, c_2(\tilde{L}(u^k)),
$
where $\hat{H}= \hat{s}_2(H) \hat{s}_2(D^{-1/2})$,
for the two-dimensional system \eqref{eq:scalesys}.
Let $\Lambda_H$, $\Lambda_D$ and  $\Lambda_{\tilde{L}}$ denote the eigenvalue matrices
of $H$, $\hat{s}_2(D^{-1/2})$ and $\hat{s}_2(\tilde{L}(u^k))$, respectively.
Then, the  preconditioner $M_D$ takes the form
\[
M_D=(\hat{S}_n\otimes \hat{S}_n)(\Lambda_H^*\Lambda_H\Lambda_D^*\Lambda_D + \alpha\, \Lambda_{\tilde{L}})(\hat{S}_n\otimes \hat{S}_n),
\]
which is computationally attractive via FSTs since any matrix operation can be done within $O(n^2\log n)$ operations.

Finally, we assume the anti-reflective BCs for $H$ and the Neumann BCs for $L(u^k)$.
For an $n^2$-by-$n^2$ matrix $A$ defined in (\ref{eq:blockA}), the Level-1 reblurring preconditioner $AR_1(A)$ is given \nolinebreak by
\[
AR_1(A)= \left(
     \begin{array}{cccc}
       AR(A_{1,1}) & AR(A_{1,2}) & \cdots & AR(A_{1,n}) \\
       AR(A_{2,1}) & AR(A_{2,2}) & \cdots & AR(A_{2,n}) \\
       \vdots & \ddots & \ddots & \vdots \\
       AR(A_{n,1}) & AR(A_{n,2}) & \cdots & AR(A_{n,n})
     \end{array}
   \right).
\]
Using the same proof as in \cite[Theorem 3.3]{NCT},
we can easily show that the Level-2 reblurring preconditioner
$AR_2(A)$ is given by $AR_2(A) =QAR_1(Q^TAR_1(A)Q)Q^T$.
Now, we design the  reblurring preconditioner $AR_2(A'_{u^k})$ for the  linear equation (\ref{fpp}).
Since $H$ is the  anti-reflective BCs matrix, we define a  reblurring preconditioner for the linear equation (\ref{fpp}) as
$
P=H^\prime H+\alpha\, AR_2(L(u^k)).
$
Also, the  reblurring preconditioner with diagonal scaling is given by $D_P=D^{1/2}PD^{1/2}$
and the reblurring preconditioner for the two-dimensional scaled  linear system (\ref{eq:scalereb}) is
$
P_D = \hat{H}^\prime \hat{H} + \alpha \, AR_2(\tilde{L}(u^k)),
$
where $\hat{H}= H\cdot AR_2(D^{-1/2})$. Let $\Lambda_H$, $\Lambda_D$,
and  $\Lambda_{\tilde{L}}$ denote the eigenvalue matrices of $H$,
$AR_2(D^{-1/2})$, and $AR_2(\tilde{L}(u^k))$, respectively. Then, the
two-dimensional preconditioner $P_D$ can be written as
\[
P_D=(T_n\otimes T_n)(\Lambda_H^*\Lambda_H\Lambda_D^*\Lambda_D + \alpha\, \Lambda_{\tilde{L}})(T_n\otimes T_n)^{-1}.
\]
Again, these two-dimensional preconditioner shows interesting computational
features since the associated linear systems can be solved within $O(n^2\log n)$ operations.

\section{Asymptotic spectral analysis of the preconditioned sequences}\label{sect:spectr}

In order to study the effectiveness of the proposed preconditioners, we need the clustering analysis of the
spectrum. Also, localization of eigenvalues is
of interest when solving \eqref{eq:scalesys} via PCG or \eqref{eq:scalereb}  by PBiCGstab \cite{Axe}. Here is a useful definition \cite{glt-vs-fourier}
for sequences of matrices  $\{A_n\}$ where $A_n$ has size $d_n$, $n$ positive integer, and $d_k>d_q$ if $k>q$.

\begin{definition}\label{def-distribution}
A matrix sequence $\{A_n\}$ is said to be {\em distributed $($in the
sense of the eigenvalues$)$ as the pair $(\theta,G)$,} or {\em
have the distribution function $\theta$}, if,  for any $F\in \mathcal
C_0({\mathbb C})$, the following limit relation holds
\begin{equation}\label{distribution:sv-eig}
\lim_{n\rightarrow \infty}\frac{1}{n} \sum_{j=1}^{n}
F\left(\lambda_j(A_n)\right)=\frac1{\mu(G)}\,\int_G F(\theta(t))\,
dt,\qquad t=(t_{1},\ldots,t_{d}),
\end{equation}
where $\{\lambda_j(A_n)\}_{j=1}^n$ denote the eigenvalues of
$A_n$ and $\mu(\cdot)$ is the standard Lebesgue measure.
In that case we write $\{A_n\}\sim_{\lambda}(\theta,G)$.
\end{definition}

An interesting consequence of the equation \eqref{distribution:sv-eig} is that
$\{A_n\}\sim_{\lambda}(\theta,G)$ implies that most of the
eigenvalues are contained within any $\epsilon$-neighborhood of the
essential range of $\theta$. That is, the range of $\theta$ is
a cluster for the spectrum of the sequence $\{A_n\}$.

The main observation is that all the matrices considered so far
are low-rank perturbations of Toeplitz matrices or can be viewed as
extracted from Generalized Locally Toeplitz (GLT) sequences (see \cite{glt-vs-fourier}
and references therein and the seminal work
\cite{Tilli}). This observation is very important since every GLT
sequence has a symbol and this symbol is the spectral distribution
function in the sense of the latter definition. Furthermore, the
class of GLT sequences is an algebra of matrix-sequences. Hence, when making linear
combinations, products, or inverses (the latter operation only when the symbol does not
vanish on sets of positive measure), the result is a new GLT
sequence whose symbol can be obtained via the same operations on the
original symbols, as those performed on the matrices. Therefore, a particular case is that of
the preconditioned matrices can be seen again as extracted from a GLT
sequence whose symbol is the ratio of the symbols: here the numerator is the symbol of the original
matrix sequence and the denominator is the symbol of the preconditioning sequence.

In this section, according to Definition \ref{def-distribution} and since  we are interested in asymptotic estimates, we are forced to indicate explicitly the parameter $n$ which
uniquely defines the size of the associated matrix.
First, we discuss in detail the case of reflective BCs.
When considering $B_n=L(u^k)$, it is well-known \cite{glt-vs-fourier}
that
\[
\begin{array}{lcl}
\{B_n\} \sim_\lambda \left(a(x)w(t), G\right),& G = \Omega \times [0, 2\pi]^d,
\\[2mm]
a(x) = \frac{1}{\sqrt{|\nabla x|^2+\beta^2}}
& w(t) = \sum\limits_{i=1}^d(2-2\cos(t_i)).
\end{array}
\]
On the other hand, $c(B_n)\sim_\lambda \left( \overline a w(t), G\right)$,
where $\overline a$ is a constant and in fact it is the mean of the function $a(x)$:
$\overline a = \frac{1}{\mu(\Omega)}\int_{\Omega} a(x)\, dx$ .
The sequence $\{c(B_n)^{-1}B_n\}$ is clustered at one only if
the sequence $\{B_n-c(B_n)\}$ is clustered at zero.
Since $\{B_n-c(B_n)\}\sim_\lambda((a(x)-\overline a)w(t),G)$,
the optimal cosine preconditioner is effective only if the function
$a(x)$ has no large variation.
To obtain a clustering preconditioner, a diagonal scaling has
to be introduced. Indeed, the preconditioner $\diag(B_n)^{1/2}c(B_n)\diag(B_n)^{1/2}$ is such
that $\{\diag(B_n)^{1/2}c(B_n)\diag(B_n)^{1/2}\} \sim_\lambda (a(x)w(t),G)$ due to the
algebra stucture of GLT sequences,
and hence the preconditioned sequence is clustered at one.
In our case, the coefficient matrix
\[
A_n=H^*H+\alpha L(u^k)
\]
is the sum of an integral approximate operator and an approximate elliptic differential operator.
We note that $\{A_n\}\sim_\lambda (|\hat{h}(t)|^2+\alpha a(x)w(t),G)$,
where $\hat{h}$ is the symbol of the PSF defined in \eqref{eq:symb}
for the 1D case and similarly can be defined for $d>1$ (the entries of the
PSF are the Fourier coefficients of $\hat{h}$).
An effective preconditioner has to consider both terms which consitute the matrix $A_n$.
This is the aim of the preconditioner $R_n$ defined in \eqref{eq:R} and
\eqref{eq:R2} for the 1D and 2D case, respectively.
We have $\{R_n\}\sim_\lambda (|\hat{h}(t)|^2+\alpha \overline a  w(t),G)$
and so  $\{A_n-R_n\}\sim_\lambda ((a(x)-\overline a)\alpha w(t),G)$.
In this case, we can not apply a diagonal scaling to $c(L(u^k))$
because otherwise we loose the computational efficiency,
the matrix $H^*H+\alpha \diag(L(u^k))^{1/2}c(L(u^k))\diag(L(u^k))^{1/2}$
can not be diagonalized by discrete cosine transforms.
Therefore, we have to apply to $R_n$ a diagonal scaling
which should be act like $\diag(L(u^k))^{1/2}$ on $c(L(u^k))$,
while it should be no affect the term $H^*H$.
Unfortunately, since we have a diagonal scaling we can not apply
the diagonal scaling only to a term of the sum.
To balance the contribution of the two terms, the diagonal scaling
is defined by the matrix $D$ in \eqref{eq:D} which leads to the preconditioner $D_R$.
We have $\{(D_R)_n\}\sim_\lambda ((1+\alpha a(x))(|\hat{h}(t)|^2+\alpha w(t)),G)$
and hence the preconditioned sequence is not clustered at one,
even if for values of $\alpha$ used in the considered applications
it shows an optimal behaviour (see Figure \ref{fig:pcgn}).
We recall that the clustering is a useful property but it is not strictly necessary
for the optimality of the related preconditioned Krylov method: for instance in the
Hermitian positive definite case and when dealing with the PCG iterations, the spectral equivalence is sufficient.
Since $D_R^{-1}A_n$ is similar to $R^{-1}\tilde{A}_n$, with $\tilde{A_n}=D^{-1/2}A_nD^{-1/2}$,
the use of the preconditioner $D_R$ to the linear system \eqref{fp} is equivalent
to apply the preconditioner $R$ to the scaled linear system \eqref{eq:scalesys}.
However, the scaling of the linear suggest to use a cosine preconditioner
different from $R$ that is $R_D$ which is more effective for large values
of $\alpha$ (see numerical results in Section \ref{num2}).

\begin{remark}\label{rem:alpha}
For small values of $\alpha$, i.e., when few regularization is required,
the three preconditioners $R$, $D_R$ and $R_D$ have a similar behaviour.
Moreover, when $\alpha$ goes to zero the effectiveness of the proposed preconditioners
increases because the preconditioners and the original coefficient matrix  $A_n$ all tend to $H^*H$.
\end{remark}

For concluding this section, we note that in the case of anti-reflective BCs
similar considerations can be done. The main difference is when we consider
the reblurring strategy. However, using the results in \cite{GS07:jacobi-nonsymm},
the nonsymmetric case can be considered as well since the antisymmetric part
has trace norm (sum of all singular values) bounded by a pure constant independent of $n$.
Therefore the spectral distribution is governed by the symmetric part which is dominant
as discussed in Section 3.3 of \cite{ADS08}.

\section{Numerical Tests}\label{num2}

We will solve the problem (\ref{el}) by the fixed
point method \eqref{fp} with the operator $\ch$ approximated  by
using different BCs and with the matrix $\cl$ imposed by zero Neumann BCs or anti-reflective BCs. The algorithm was
implemented in {\tt MATLAB 7.10} and run on a PC Intel Pentium IV of
3.00 GHZ CPU. We shall show the effectiveness of the proposed
preconditioners for the signal/image deblurring and also give a
comparison of the quality of the restored signals/images with
different BCs.

In our test, for simplicity, we choose initial guess $u^0=v$
for the FP algorithm.  We shall solve  (\ref{fp})
by the PCG method when the Neumann BCs imposed on $L(u^k)$  and  solve  (\ref{fp})
by the PBiCGstab method when the anti-reflective BCs imposed on $L(u^k)$. Also, we  solve   (\ref{fpp}) by the PBiCGstab method. The initial guess for the PCG and PBiCGstab methods in $k$th FP
iteration is chosen to be the $(k-1)$th FP iterate.
The PCG and PBiCGstab iterations are
stopped  when the residual vector $r_k$ of the linear systems
(\ref{fp}) and (\ref{fpp}) at the $k$th iteration satisfies
$\|r_k\|_2/\|r_0\|_2< tol$, where $tol$ is set to $10^{-6}$
and $10^{-5}$ in the 1D and 2D case, respectively.

\subsection{1D case: Signal Deblurring}

\begin{figure}
\begin{center}
\begin{tabular}{cc}
  \epsfig{figure=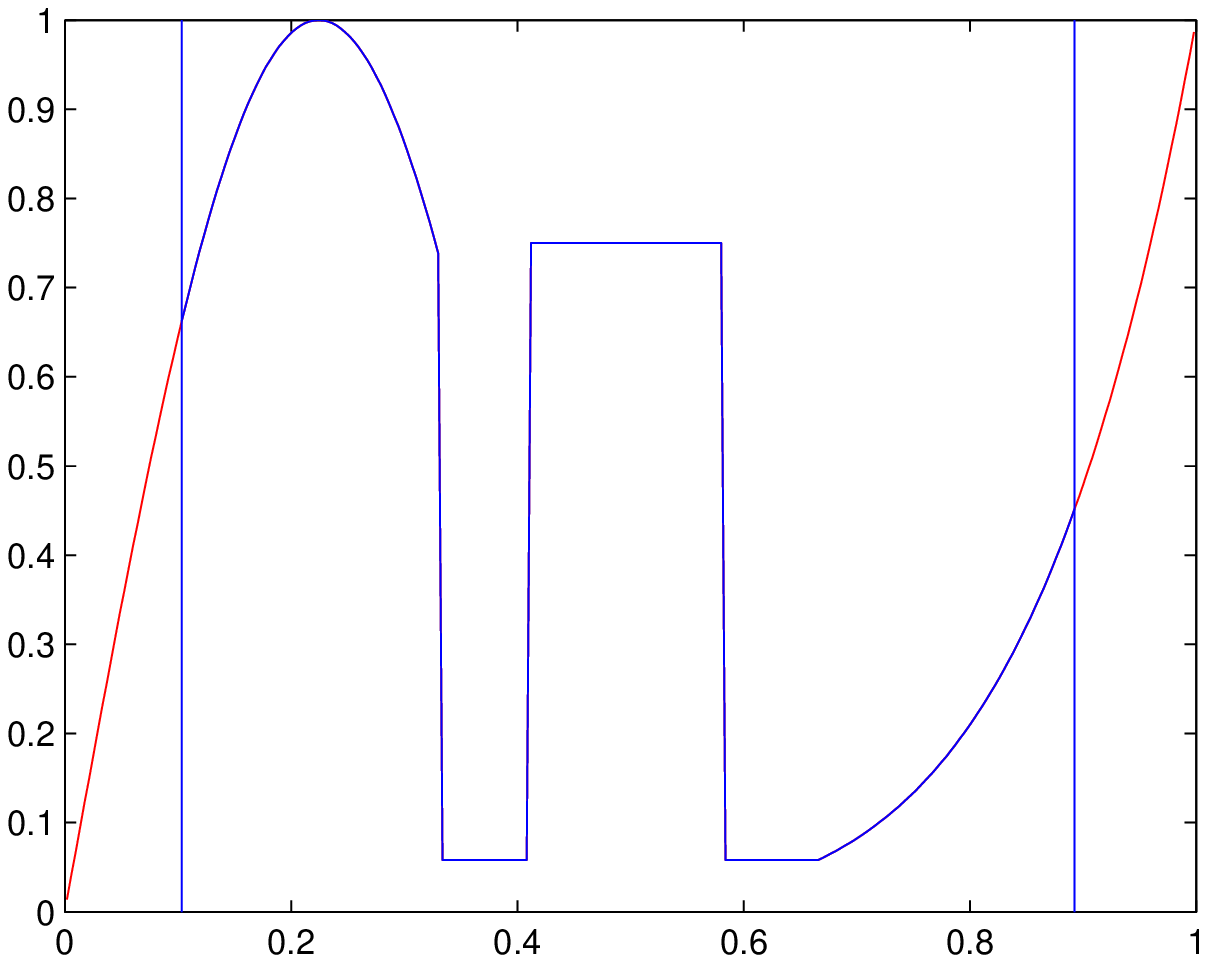,width=6cm} & \epsfig{figure=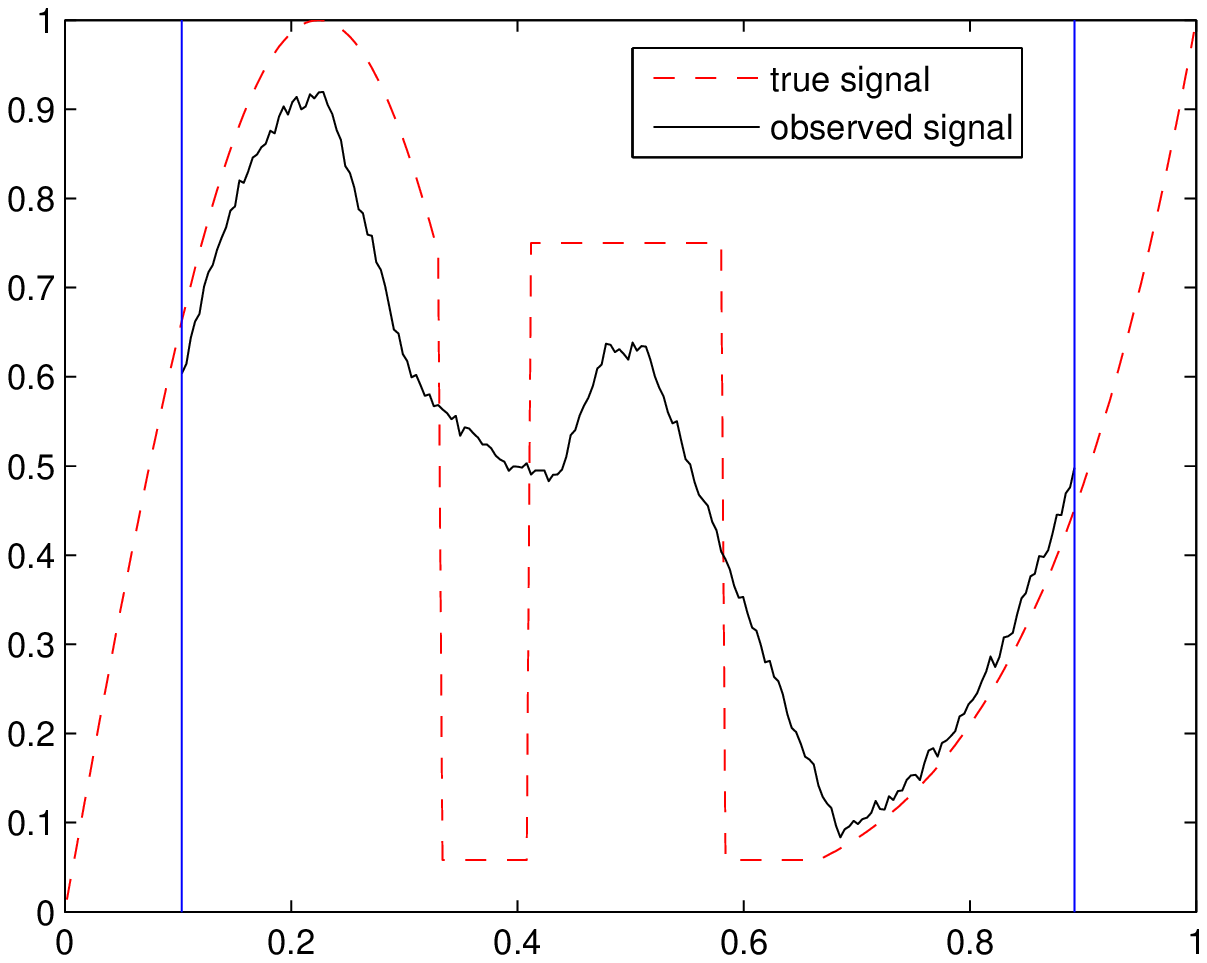,width=6cm} \\
  (a) True signal & (b) Observed signal\\
\end{tabular}
\end{center}
\caption{True and observed signals} \label{fig:s-of}
\end{figure}

In our experiments, we suppose the true signal $u$ is given as in
Figure  \ref{fig:s-of}(a). The two vertical lines shown in Figure
\ref{fig:s-of}(a) denote the field of view (i.e., $[0.1, 0.9]$) of
our signal and the signal outside the two vertical lines can be
approximated by different BCs. The true signal is
blurred by the symmetric out of focus  PSF:
\begin{equation} \label {of}
  h_i = \left\{ \begin{array}{ll}
          c & \mbox{if } |i| < m(n), \\
          0 & \mbox{otherewise},
        \end{array}\right.
  \end{equation}
where $c$ is the normalization constant such that $\sum_{i}h_i =1$
and $m(n)$ is the center of the  PSF which depends on $n$ so that
the restored signal lies in the interval $[0.1,0.9]$. A Gaussian noise $\eta$ with noise-to-signal ratio
$\|\eta\|_2/\|Hu\|_2$ is added to the blurred signal. We consider
the true signal is blurred by the out of focus PSF  and
then added the Gaussian noise with the noise levels $1\%$, i.e., $\|\eta\|_2/\|Hu\|_2=0.01$. Figure \ref{fig:s-of}(b) show the observed
signal.

We now show that the proposed preconditioners are effective for solving
(\ref{fp}) and (\ref{fpp}) with different BCs.
In our numerical experiment, the FP iteration is stopped when
$\|u^k-u^{k-1}\|_2/\|u^k\|_2<10^{-3}$.
We will concentrate on the performance of different
choices of preconditioners for various of parameters $\alpha$,
$\beta$, and $n$.

\begin{table}
 \begin{center}
   \begin{tabular}[c]{l|cccccc|cccccc}
     \hline
\textsf{PCG} & \multicolumn{6}{|c}{R} & \multicolumn{6}{|c}{AR+Sine+ZN $\cl$} \\ \hline
$\alpha$   & $N$  & $I$   & $D$   & $R$  & $D_R$& $R_D$ &  $N$ & $I$    & $D$    & $M$  & $D_M$& $M_D$\\   \hline
 $10^{-1}$ & $30$ & $269$ & $163$ & $73$ & $49$ & $45$  & $28$ & $221$  & $155$  & $60$ & $51$ & $36$ \\
 $10^{-2}$ & $37$ & $172$ & $107$ & $84$ & $37$ & $32$  & $24$ & $149$  & $ 94$  & $67$ & $33$ & $25$ \\
 $10^{-3}$ & $32$ & $99$  & $71$  & $63$ & $42$ & $36$  & $24$ & $ 80$  & $ 59$  & $56$ & $37$ & $31$ \\
 $10^{-4}$ & $19$ & $57$  & $56$  & $38$ & $36$ & $35$  & $19$ & $ 60$  & $ 58$  & $31$ & $30$ & $29$ \\
 $10^{-5}$ & $20$ & $74$  & $71$  & $23$ & $25$ & $24$  & $17$ & $ 47$  & $ 45$  & $20$ & $20$ & $19$ \\
 $10^{-6}$ & $11$ & $122$ & $122$ &  $7$ &  $7$ &  $8$  &  $8$ & $ 85$  & $ 86$  & $14$ & $14$ & $14$ \\
 \hline
\end{tabular}
\end{center}

\begin{center}
   \begin{tabular}[c]{l|cccccc|cccccc}
     \hline
\textsf{PBiCGstab} & \multicolumn{6}{|c|}{AR+Reblur+ZN $\cl$} & \multicolumn{6}{|c}{AR+Reblur+AR $\cl$}\\ \hline
$\alpha$   & $N$  & $I$   & $D$   & $P$  & $D_P$& $P_D$& $N$  & $I$   & $D$   & $P$  & $D_P$& $P_D$\\   \hline
 $10^{-1}$ & $27$ & $178$ & $105$ & $57$ & $53$ & $51$ & $24$ & $179$ & $109$ & $41$ & $46$ & $35$\\
 $10^{-2}$ & $24$ & $105$ & $59$  & $59$ & $25$ & $21$ & $23$ & $107$ & $60$  & $39$ & $23$ & $20$\\
 $10^{-3}$ & $23$ & $60$  & $44$  & $43$ & $27$ & $23$ & $22$ & $54$  & $37$  & $34$ & $22$ & $20$\\
 $10^{-4}$ & $20$ & $33$  & $31$  & $20$ & $18$ & $18$ & $20$ & $32$  & $30$  & $18$ & $16$ & $17$\\
 $10^{-5}$ & $25$ & $31$  & $33$  & $10$ & $ 9$ & $10$ & $20$ & $31$  & $32$  & $10$ & $10$ & $10$\\
 $10^{-6}$ &  $9$ & $78$  & $68$  & $4$  & $4$  & $ 4$ & $11$ & $56$  & $62$  & $4$  & $4$  & $4$\\
\hline
\end{tabular}
  \end{center}
  \caption{Average number of PCG/PBiCGstab iterations per FP step varying $\alpha$, with $n=203$ and $\beta=0.1$.}\label{table1-1}
\end{table}

\begin{table}
\begin{center}
   \begin{tabular}[c]{l|cccccc|cccccc}
     \hline
\textsf{PCG} & \multicolumn{6}{|c}{R} & \multicolumn{6}{|c}{AR+Sine+ZN $\cl$} \\ \hline
$\beta$    & $N$  & $I$   & $D$   & $R$   & $D_R$& $R_D$& $N$  & $I$    & $D$    & $M$   & $D_M$ & $M_D$  \\   \hline
 $10^{-3}$ & $31$ & $434$ & $245$ & $305$ & $298$& $125$& $24$ & $349$  & $200$  & $297$ & $226$ & $85$   \\
 $10^{-2}$ & $31$ & $218$ & $139$ & $149$ & $ 98$& $67$ & $24$ & $175$  & $112$  & $139$ & $ 72$ & $46$   \\
 $10^{-1}$ & $32$ & $99$  & $71$  & $63$  & $42$ & $36$ & $24$ & $ 80$  & $ 59$  & $ 56$ & $37$  & $31$   \\
 $10^{0}$  & $28$ & $39$  & $36$  & $21$  & $18$ & $20$ & $21$ & $ 35$  & $ 32$  & $ 19$ & $16$  & $15$   \\
\hline \end{tabular}
  \end{center}

 \begin{center}
   \begin{tabular}[c]{l|cccccc|cccccc}
     \hline
\textsf{PBiCGstab} & \multicolumn{6}{|c}{AR+Reblur+ZN $\cl$} & \multicolumn{6}{|c}{AR+Reblur+AR $\cl$}\\ \hline
$\beta$    & $N$  & $I$   & $D$  & $P$   & $D_P$& $P_D$& $N$  & $I$   & $D$   & $P$  & $D_P$ & $P_D$\\   \hline
 $10^{-3}$ & $23$ & $403$ & $254$& $361$ & $346$& $59$ & $22$ & $323$ & $207$ & $288$& $285$ & $57$\\
 $10^{-2}$ & $23$ & $157$ & $87$ & $130$ & $ 58$& $43$ & $22$ & $134$ & $77$  & $102$& $43$  & $28$\\
 $10^{-1}$ & $23$ & $60$  & $44$ & $43$  & $27$ & $23$ & $22$ & $54$  & $37$  & $34$ & $22$  & $20$\\
 $10^{0}$  & $20$ & $21$  & $20$ & $12$  & $11$ & $11$ & $20$ & $19$  & $18$  & $10$ & $ 9$  & $9$\\
\hline \end{tabular}
  \end{center}
  \caption{Average number of PCG/PBiCGstab iterations per FP step varying $\beta$, with $n=203$ and $\alpha=0.001$.}\label{table1-2}
\end{table}

In Tables \ref{table1-1} and \ref{table1-2}, we report the average number of iterations
per FP iteration, where $N$, $I$ and $D$ denote the number of FP steps,
no preconditioner and the diagonal scaling preconditioner, respectively.
According to Remark \ref{rem:alpha}, the effectiveness of the proposed preconditioners
increases when $\alpha$ decreases. Moreover, decreasing $\alpha$ all the proposed preconditioners
become equivalents, explicitly the PCG/PBiCGstat converges in about the same number of iterations.

We note that anti-reflective BCs usually require lesser steps and lesser PCG/ BiCGstab
iterations per FP step when compared with reflective BCs.
This shows that the improvement
in the model also leads to an improvement in the global computational complexity of the
numerical methods.
This is more evident for the optimal restoration since antireflective BCs require
a regularization parameter $\alpha$ smaller than the reflective BCs (Figures \ref{fig:tvrs}--\ref{fig:restored}).

Figure  \ref{fig:pcgn} describes the average PCG/PBiCGstab iterations per FP step
varying $n$. We note that the preconditioners with a diagonal scaling show an optimal behavior.

\begin{figure}
\begin{center}
    \begin{center}
    \begin{minipage}[c]{6cm}
        \centering
        \epsfig{figure=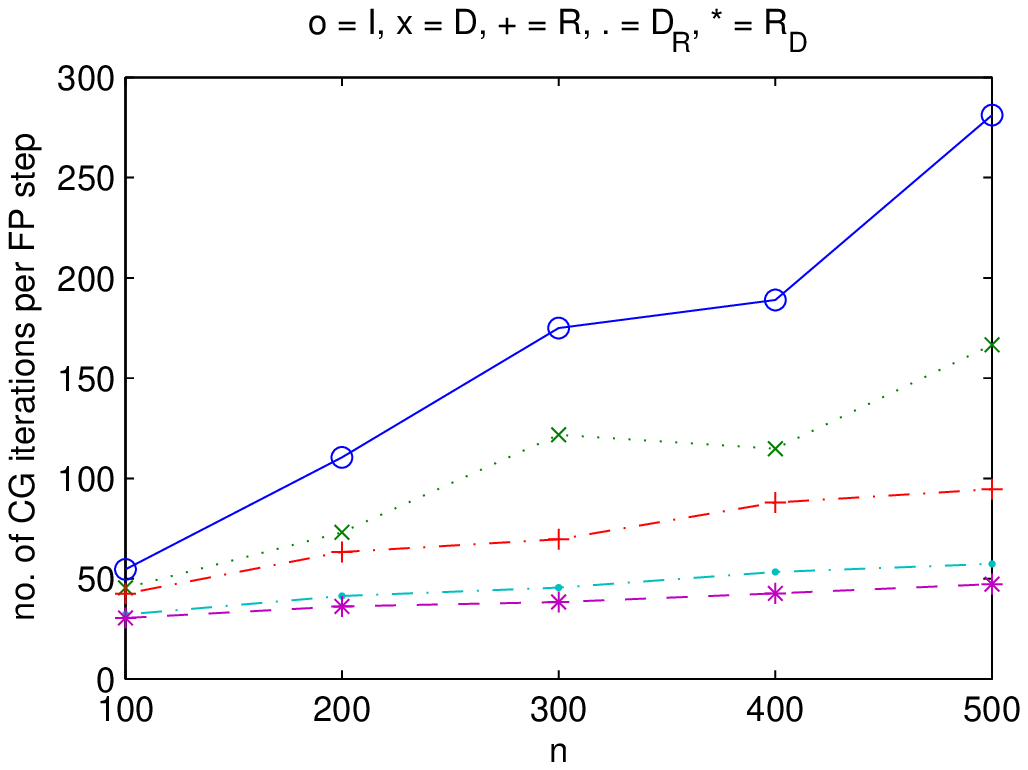,width=6cm}
        \small{(a)}
    \end{minipage}
    \begin{minipage}[c]{6cm}
        \centering
        \epsfig{figure=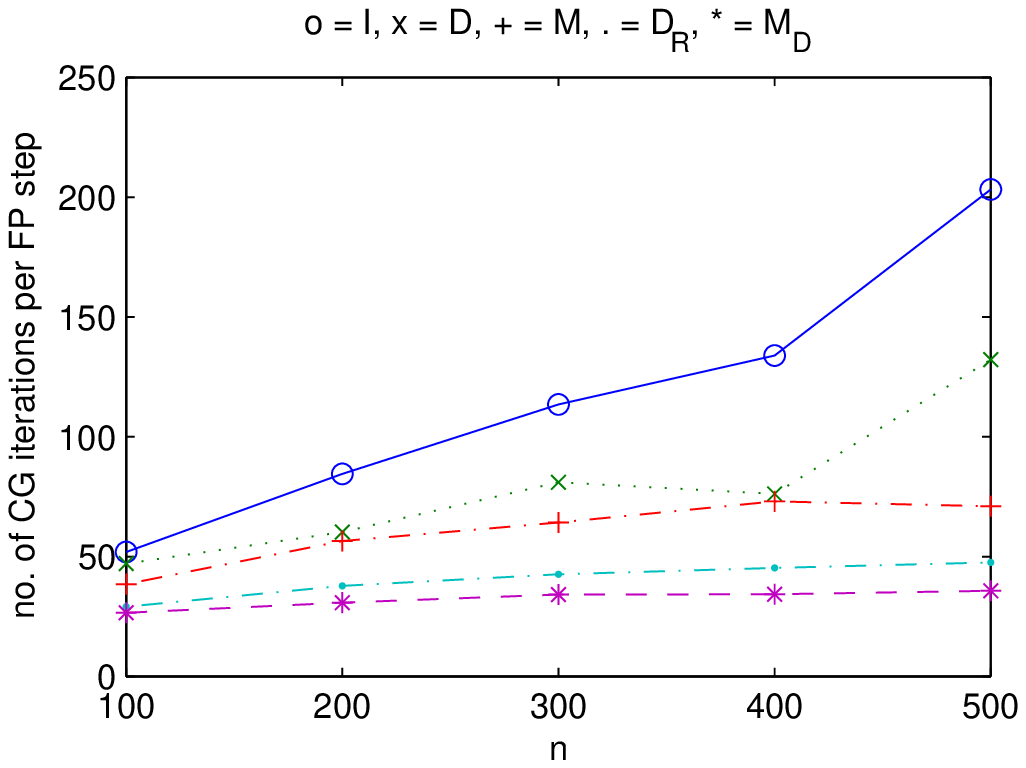,width=6cm}
        \small{(b)}
    \end{minipage} \\\vspace{0.5cm}
    \begin{minipage}[c]{6cm}
        \centering
        \epsfig{figure=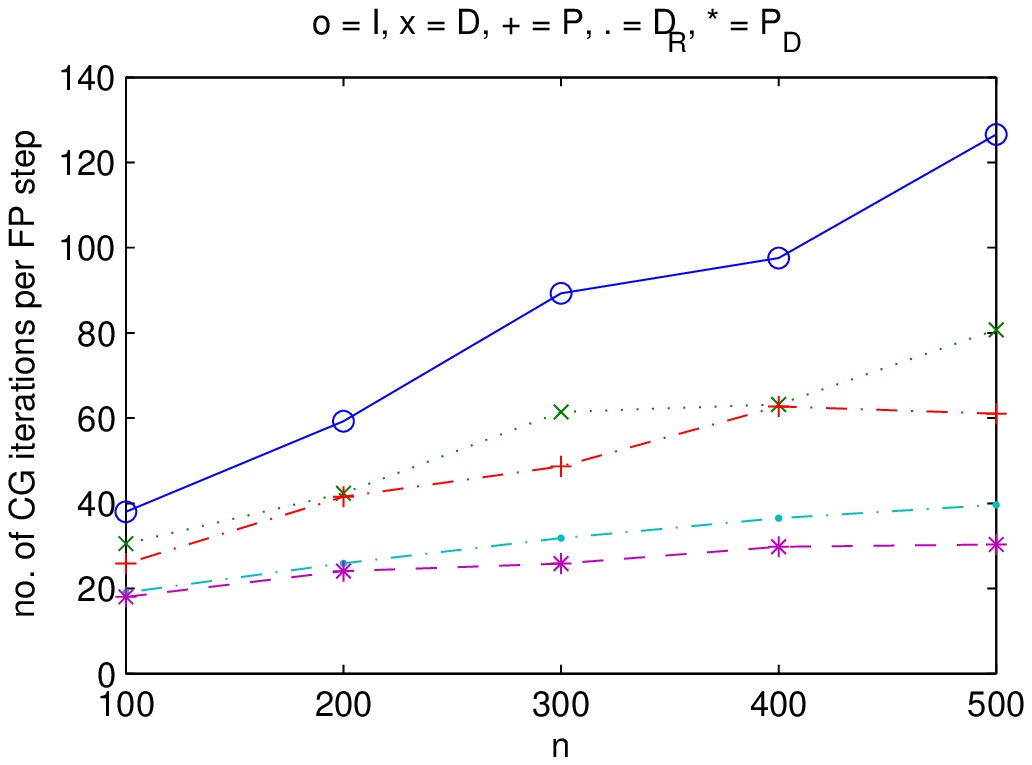,width=6cm}
        \small{(c)}
    \end{minipage}
    \begin{minipage}[c]{6cm}
        \centering
        \epsfig{figure=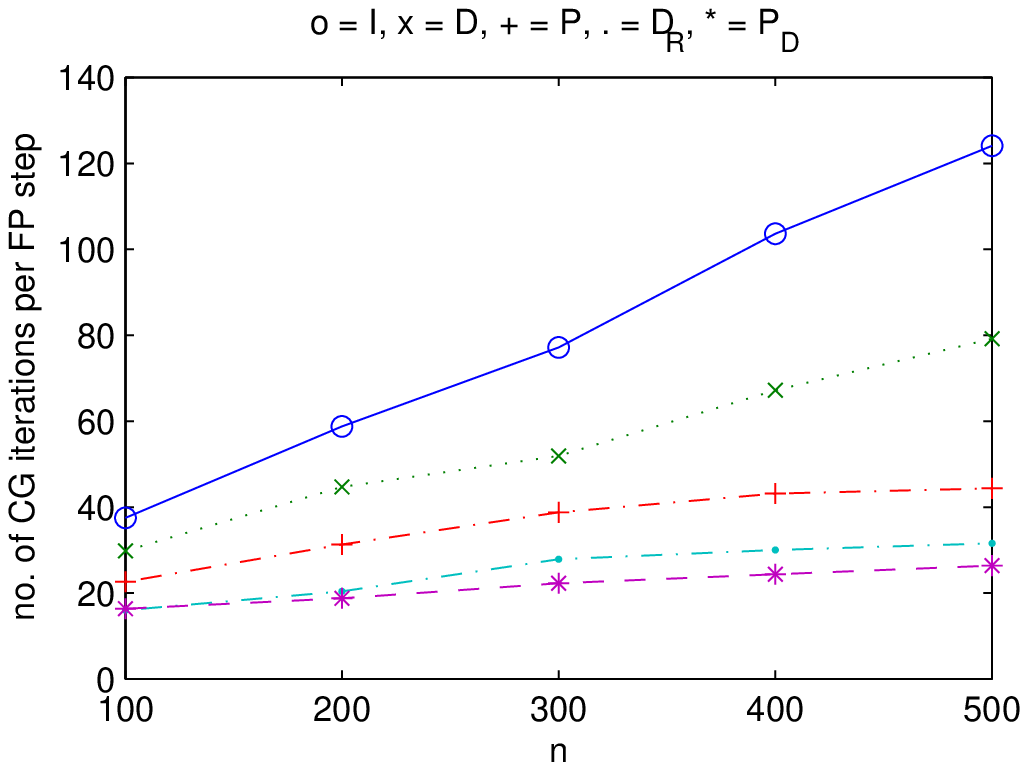,width=6cm}
        \small{(d)}
    \end{minipage}
    \end{center}
\end{center}
\caption{Average number of PCG/PBiCGstab iterations per FP step for
various $n$, with $\alpha =10^{-3}$ and $\beta= 0.1$.
(a) Reflective BCs.
(b) Anti-Reflective BCs with sine preconditioner.
(c) Anti-Reflective BCs with reblurring by imposing zero Neumann BCs on $\cl$.
(d) Anti-Reflective BCs with reblurring by imposing Anti-Reflective BCs on $\cl$.
} \label{fig:pcgn}
\end{figure}

In Figure \ref{fig:beta}, we present the restored signals for varying $\beta$,
e.g., by solving system (\ref{fp}) when anti-reflective BCs have been imposed.
As expected, the recovered signals become shaper when the value of $\beta$ is smaller,
the value $\beta=0.1$ gives a restoration sufficiently good anyway.

We can easily observe from Tables \ref{table1-1}-\ref{table1-2} and Figure
\ref{fig:pcgn} that the proposed preconditioners with a diagonal scaling
are the most effective preconditioner when varying parameters $\alpha$,
$\beta$, and $n$. Finally, we remark that
in all our tests, the proposed algorithm needs the same number of FP steps for the
no-preconditioner and  preconditioned cases and  $\|g(u^k)\|_2$ tends to
$O(10^{-5})$ or $O(10^{-6})$ at the final FP iterate.

\begin{figure}
\begin{center}
    \begin{minipage}[c]{6cm}
        \centering
        \epsfig{figure=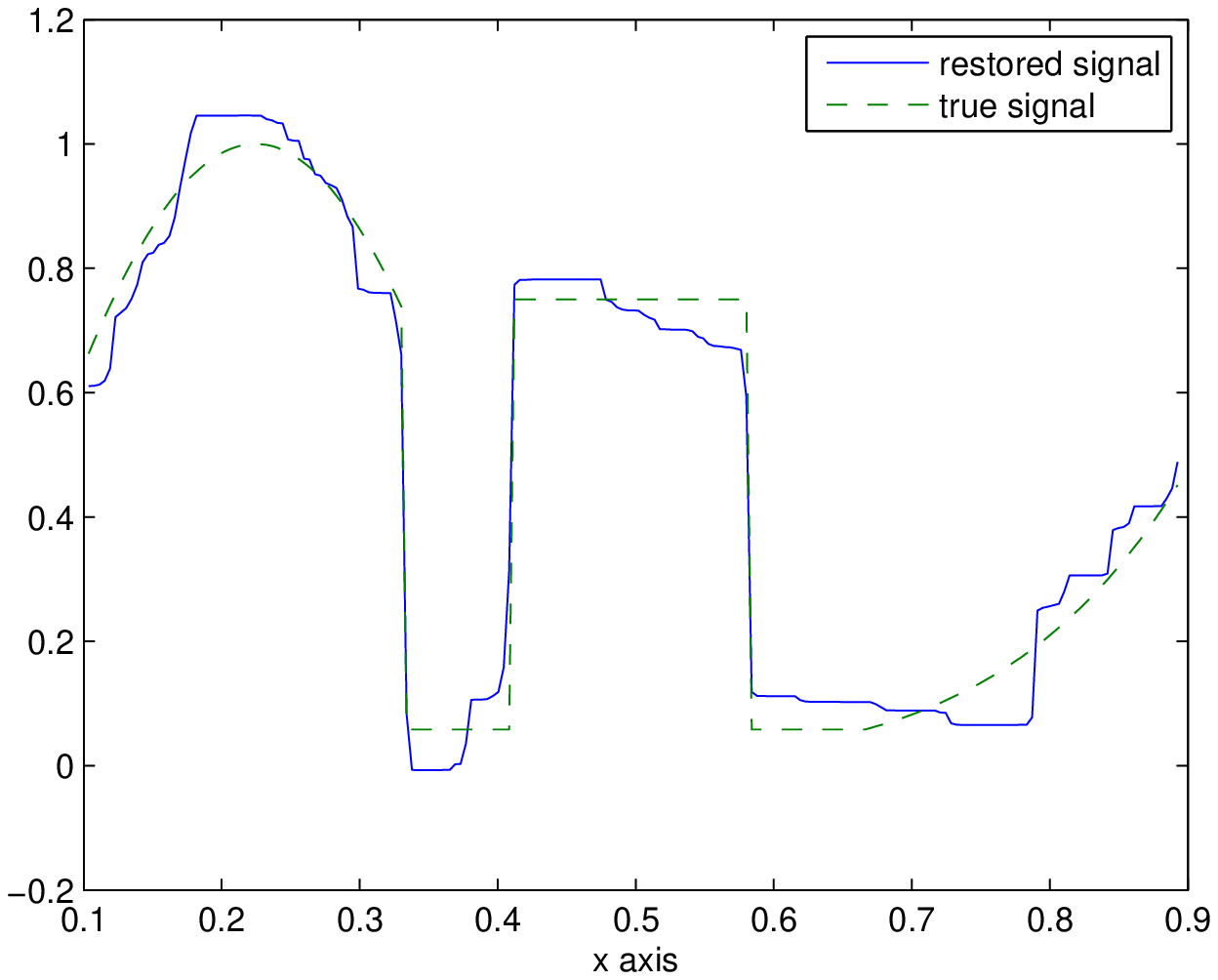,width=6cm}
        \small{$\beta=0.01$}
    \end{minipage}
    \begin{minipage}[c]{6cm}
        \centering
        \epsfig{figure=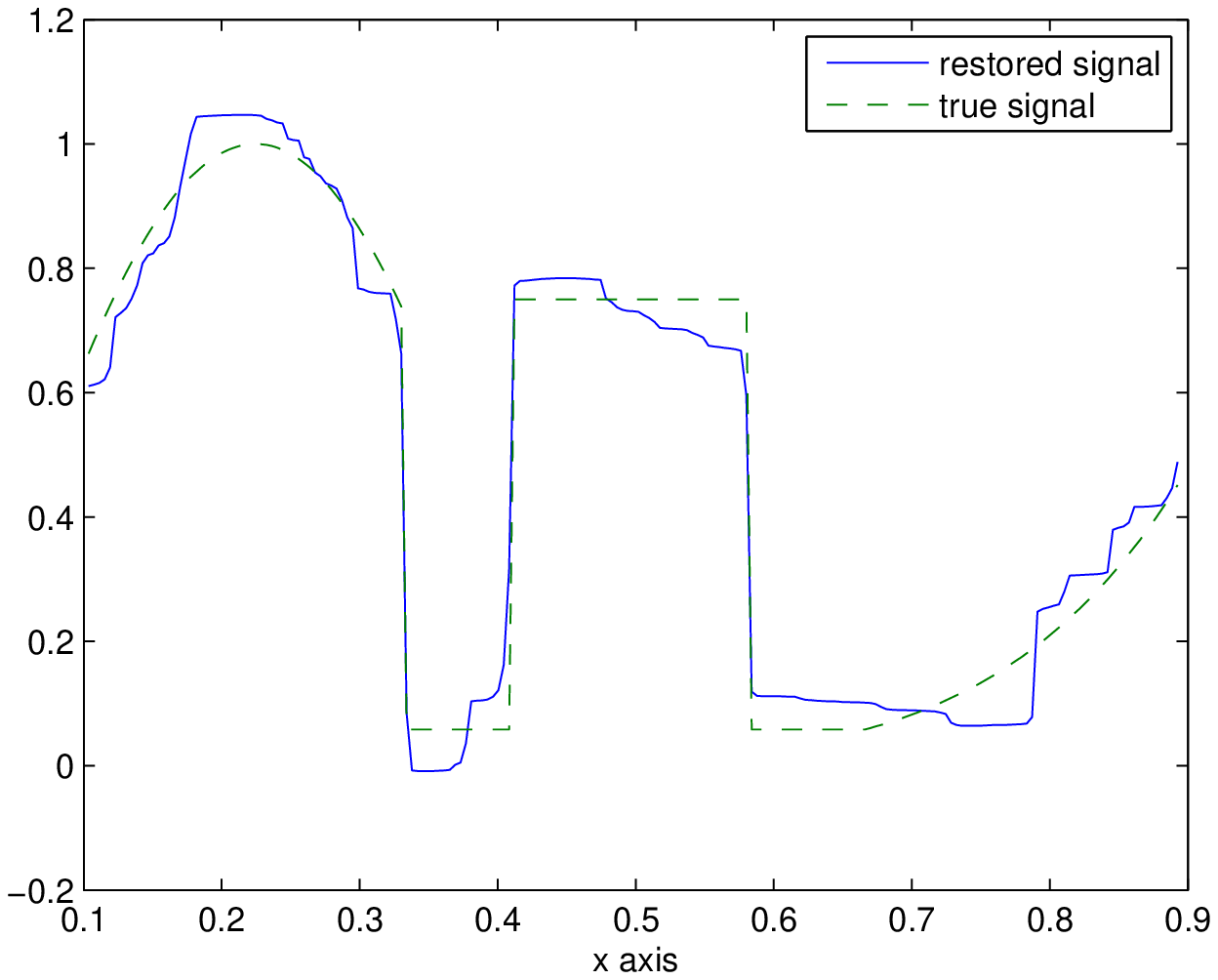,width=6cm}
        \small{$\beta=0.1$}
    \end{minipage}\\\vspace{0.5cm}
    \begin{minipage}[c]{6cm}
        \centering
        \epsfig{figure=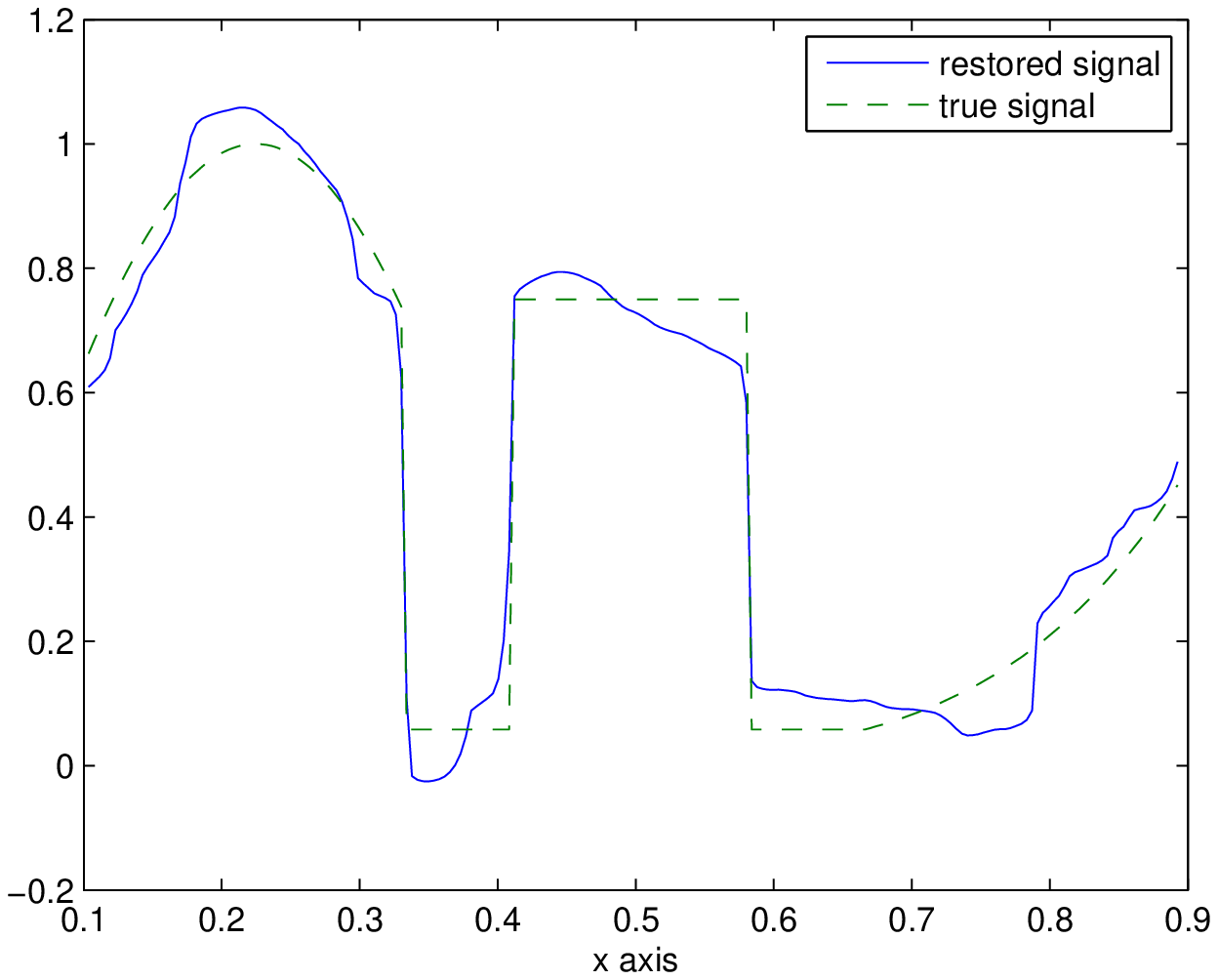,width=6cm}
        \small{$\beta=1$}
    \end{minipage}
    \begin{minipage}[c]{6cm}
        \centering
        \epsfig{figure=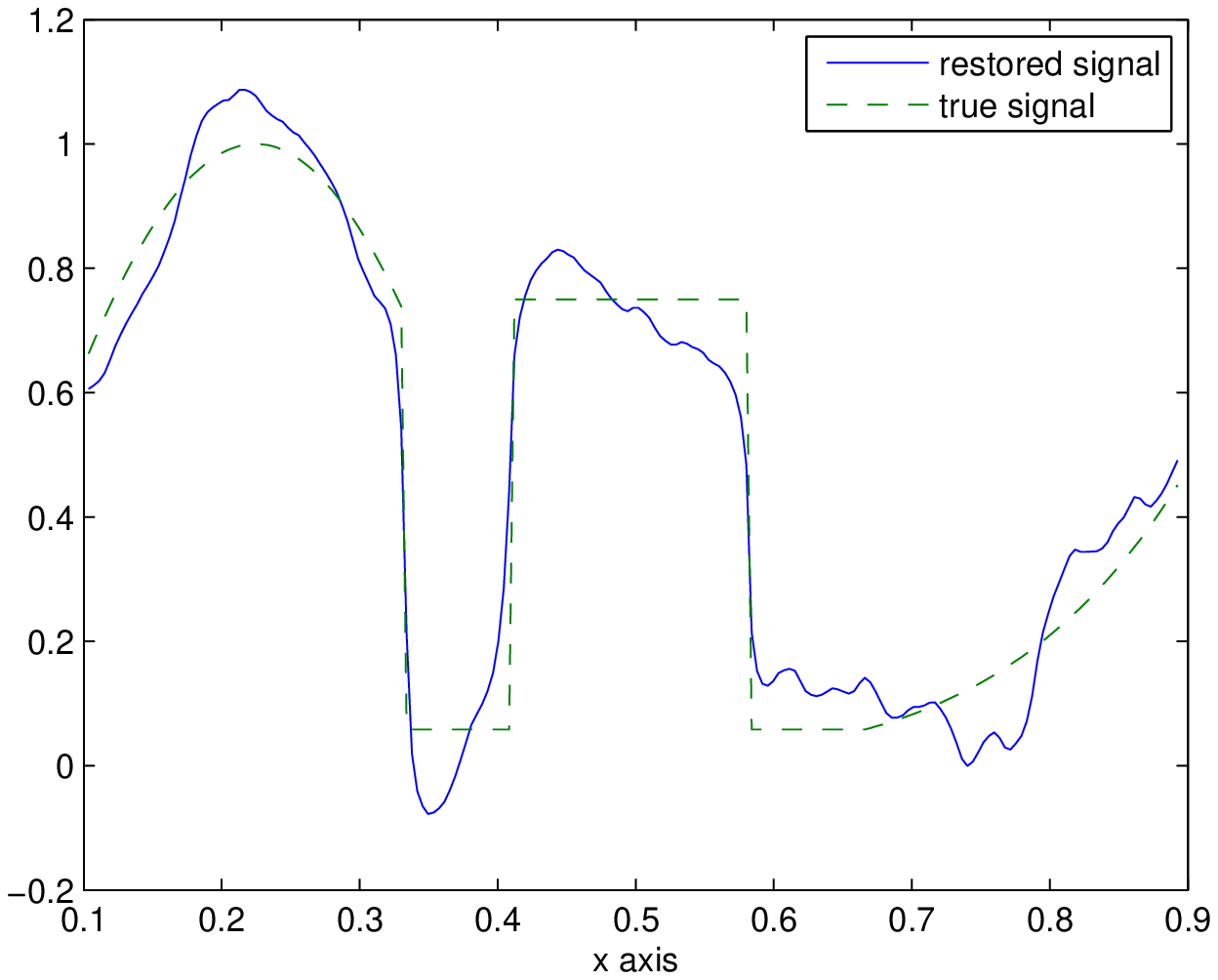,width=6cm}
        \small{$\beta=10$}
    \end{minipage}
\end{center}
\caption{Restorations for Anti-Reflective BCs based (\ref{fp}) with
$n=203$, $\alpha =10^{-3}$ varying $\beta$.}
\label{fig:beta}
\end{figure}

\begin{figure}
\begin{center}
\subfigure{\epsfig{figure=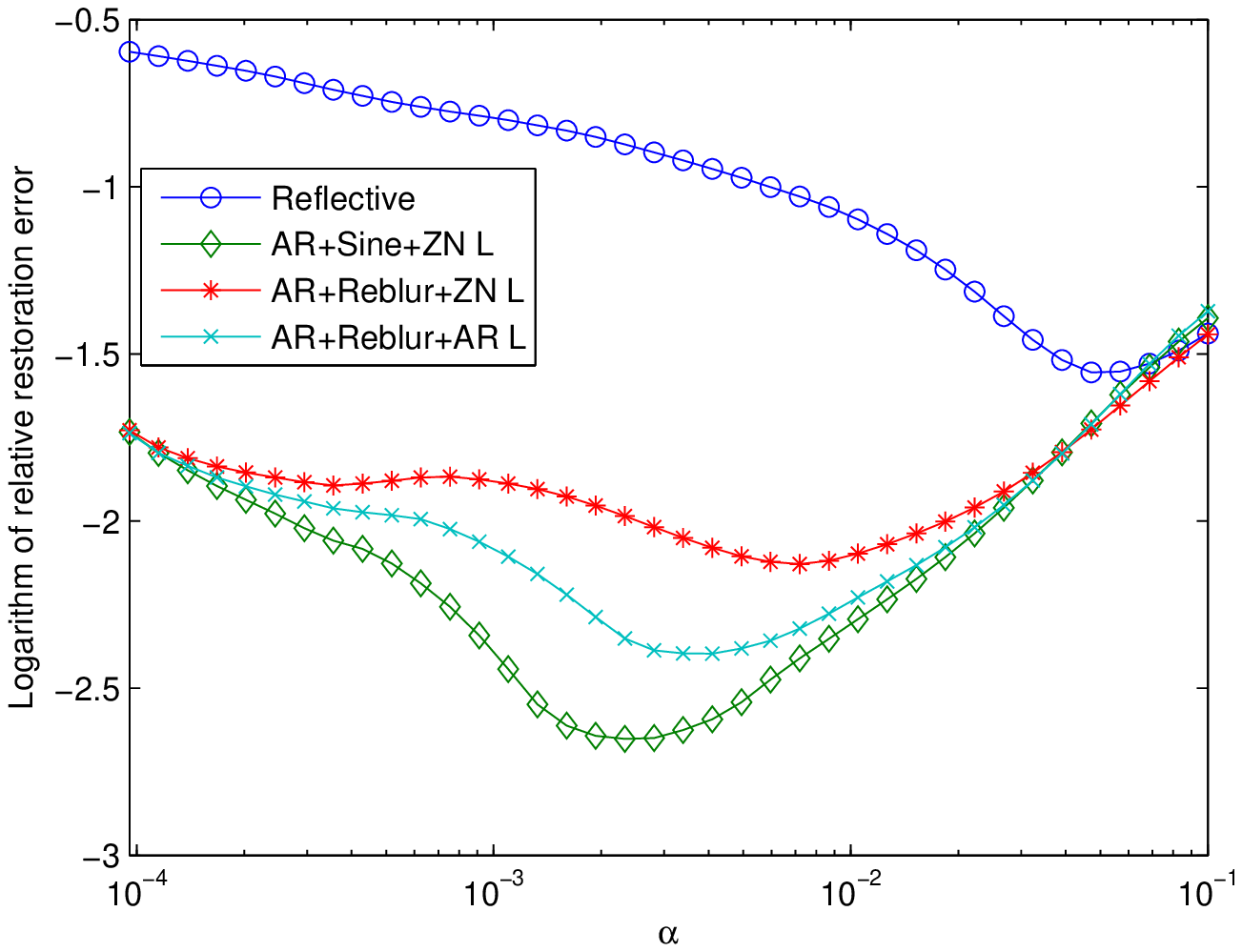,width=9cm}}
\end{center}
\caption{The RRE versus the regularization parameter $\alpha$ for different BCs.} \label{fig:tvrs}
\end{figure}

To check the quality of restored signals by using different BCs,
in Figure \ref{fig:tvrs} we show
the relative restoration error (RRE), $\|u_\alpha-u\|_2/\|u\|_2$, where $u_\alpha$ is
the total variation regularized solution of the true signal $u$,  versus the regularization
parameter $\alpha$. Figure \ref{fig:restored} gives the restored signals with optimal
value of the parameter $\alpha$, where  ${\tt \alpha_{opt}}$, {\tt Re.}, {\tt Fp.},  and ${\tt It.}$
denote the optimal value of the parameter $\alpha$, the minimal RRE, the number of FP steps, and the average number
of PCG/BiCGstab iterations per FP step, respectively.

From Figure \ref{fig:restored} we argue that anti-reflective BCs lead to the most
accurate restored signals with less significant ringing effects at the edges
and less PCG iterations per FP step, when compared with reflective BCs.
Moreover, thanks to the improvement in the model of the problem,
antireflective BCs require lesser regularization than reflective BCs.
This implies a smaller ${\tt \alpha_{opt}}$ and hence a small number of
PCG/BiCGstab iterations per FP step, while the number of FP iterations remains about the same.

\begin{figure}
\begin{center}
    \begin{minipage}[c]{6cm}
        \centering
        \epsfig{figure=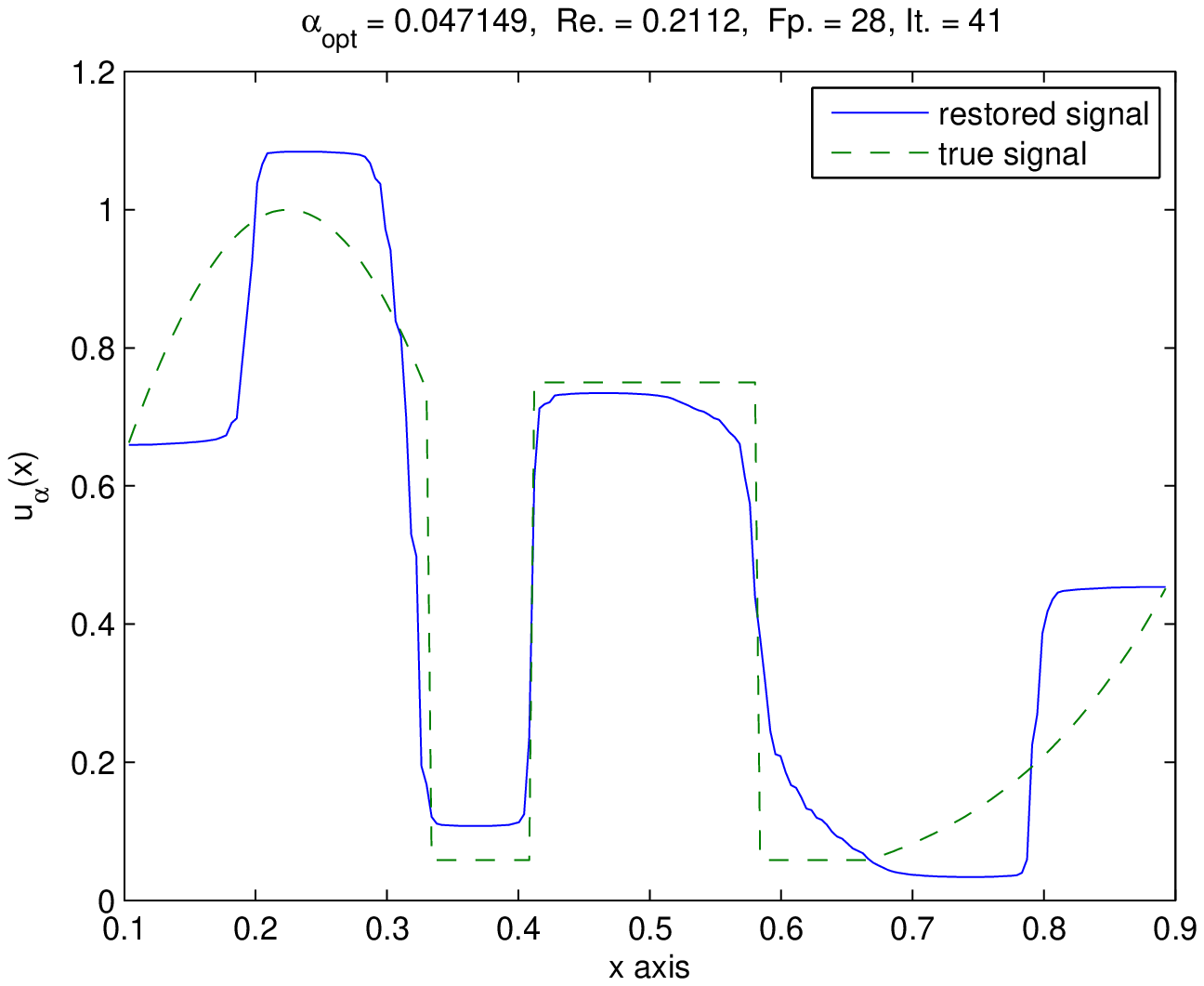,width=6cm}
        \small{Reflective}
    \end{minipage}
    \begin{minipage}[c]{6cm}
        \centering
        \epsfig{figure=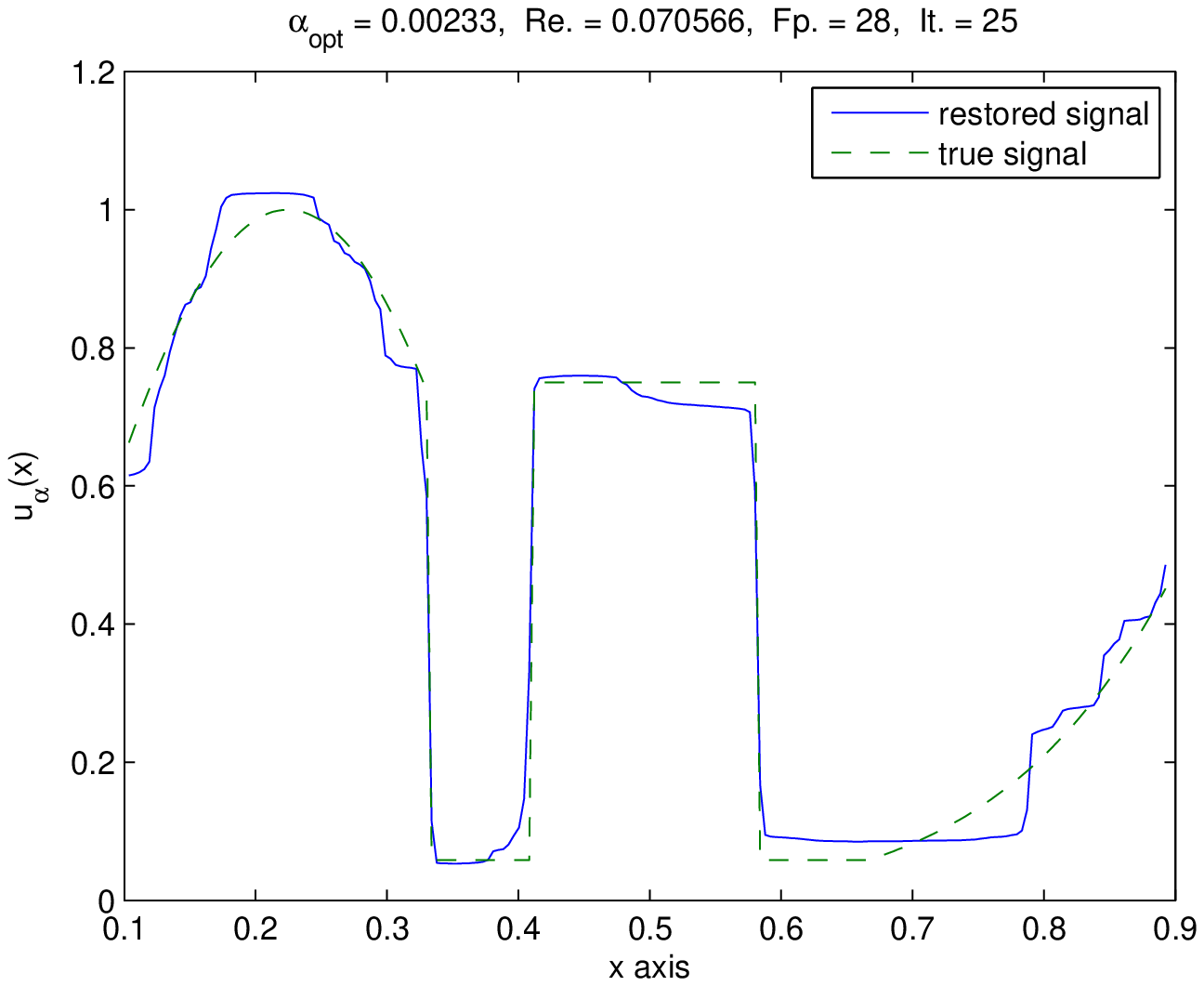,width=6cm}
        \small{AR+Sine+ZN $\cl$}
    \end{minipage}\\\vspace{0.5cm}
    \begin{minipage}[c]{6cm}
        \centering
        \epsfig{figure=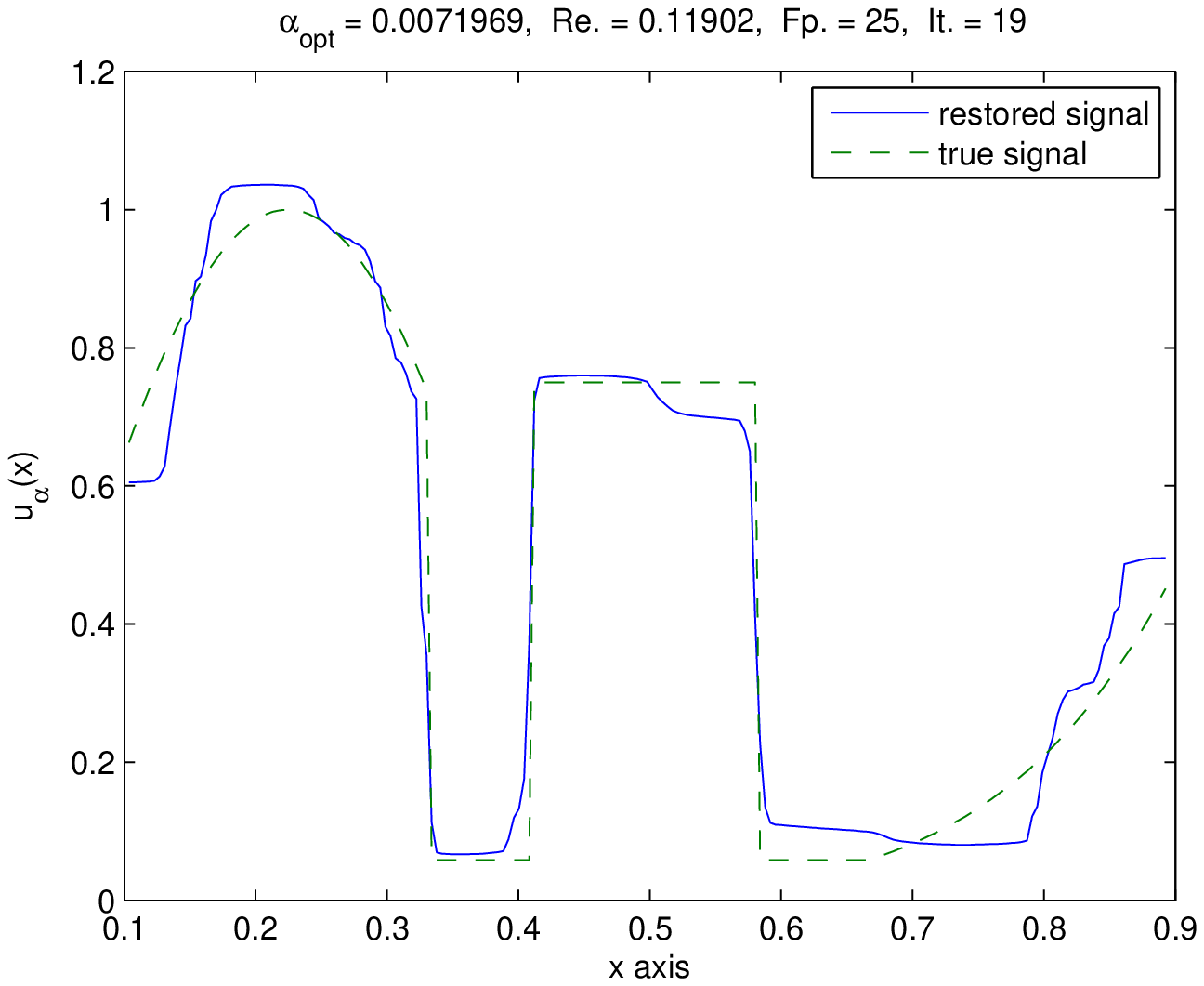,width=6cm}
        \small{AR+Reblur+ZN $\cl$}
    \end{minipage}
    \begin{minipage}[c]{6cm}
        \centering
        \epsfig{figure=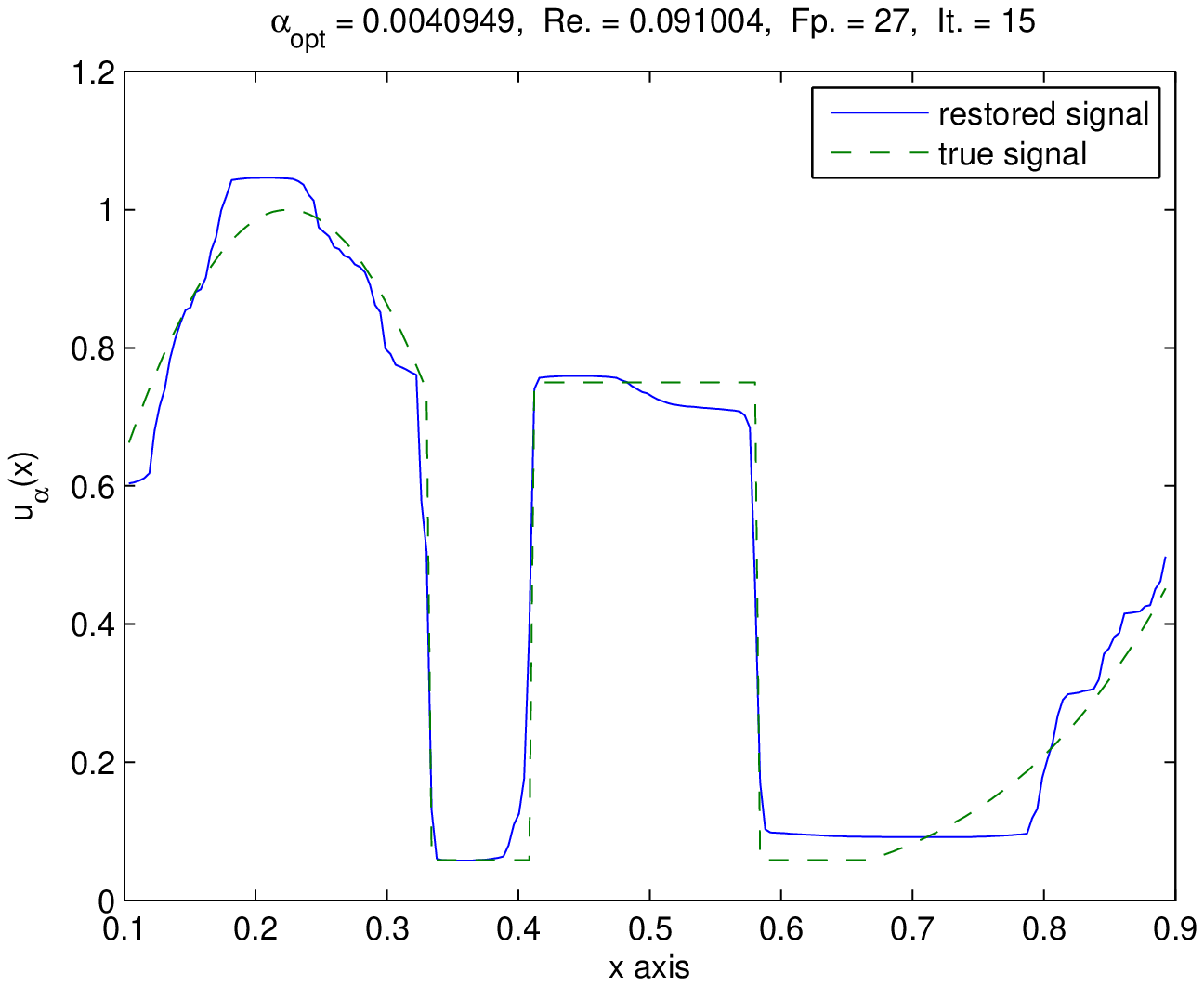,width=6cm}
        \small{AR+Reblur+AR $\cl$}
    \end{minipage}
\end{center}
\caption{Restored signals with different BCs. Here $n=203$ and $\beta=0.1$ } \label{fig:restored}
\end{figure}

\subsection{2D case: Image Deblurring}
In this section, we apply the proposed preconditioners to image restoration with different BCs.
Suppose the true images are blurred by the Gaussian blur and then suppose that a white Gaussian noise $\eta$
with the noise levels $0.1\%$ is added. Figure \ref{fig:im-g} shows the true and observed images.

\begin{figure}
\begin{center}
    \begin{minipage}[c]{6cm}
        \centering
        \epsfig{figure=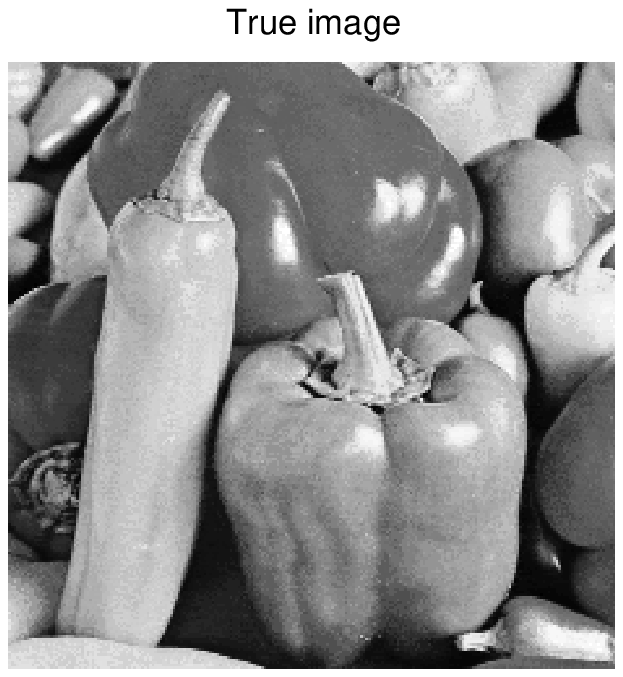,width=6cm}
    \end{minipage}\hspace{-1.2cm}
    \begin{minipage}[c]{6cm}
        \centering
        \epsfig{figure=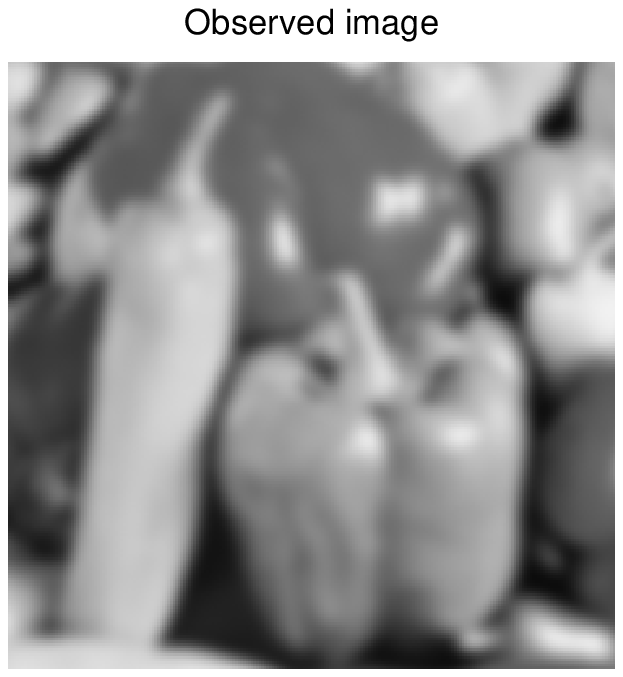,width=6cm}
    \end{minipage}
\end{center}
\caption{True and observed images} \label{fig:im-g}
\end{figure}

In our numerical tests, the FP iteration is stopped when
$\|u^k-u^{k-1}\|_2/\|u^k\|_2$ $ <10^{-4}$ and the maximal number
of FP steps is set to be $100$. We fix $\beta=0.01$ and only focus on the
performance of different choices of preconditioners for varying $\alpha$.

\begin{table}
  \begin{center}
   \begin{tabular}[c]{l|cccccc|cccccc}
     \hline
\textsf{PCG} & \multicolumn{6}{|c}{R} & \multicolumn{6}{|c}{AR+Sine+ZN $\cl$}  \\ \hline
$\alpha$  & $N$   & $I$    & $N$   & $D_R$& $N$   & $R_D$ & $N$  & $I$    & $N$   & $D_M$& $N$   & $M_D$ \\   \hline
$10^{0}$  & $60$  & $244$  & $60$  & $48$ & $60$  & $23$  & $60$ & $229$  & $60$  & $210$ & $61$  & $23$   \\
$10^{-1}$ & $41$  & $129$  & $46$  & $29$ & $33$  & $22$  & $40$ & $114$  & $40$  & $117$ & $28$  & $38$   \\
$10^{-2}$ & $13$  & $70$   & $13$  & $44$ & $11$  & $39$  & $12$ & $ 58$  & $9$  & $49$ & $ 8$  & $41$   \\
$10^{-3}$ & $ 3$  & $124$  & $ 2$  & $42$ & $ 2$  & $42$  & $ 2$ & $147$  & $2$  & $36$ & $ 2$  & $37$   \\
$10^{-4}$ & $ 2$  & $164$  & $ 1$  & $23$ & $ 1$  & $23$  & $ 1$ & $354$  & $1$  & $23$ & $ 1$  & $23$   \\
\hline
\end{tabular}
\end{center}

\begin{center}
\begin{tabular}[c]{l|cccccc|cccccc}  \hline
\textsf{PBiCGstab} &  \multicolumn{6}{|c|}{AR+Reblur+ZN $\cl$} & \multicolumn{6}{|c}{AR+Reblur+AR $\cl$}\\ \hline
$\alpha$  & $N$   & $I$  & $N$   & $D_P$& $N$   & $P_D$& $N$   & $I$   & $N$   & $D_P$& $N$  & $P_D$ \\ \hline
$10^{0}$  & $58$  & $52$ & $*$  & $*$ & $59$  & $13$ & $60$  & $52$  & $59$  & $50$ & $60$ & $7$   \\
$10^{-1}$ & $27$  & $27$ & $33$  & $16$ & $31$  & $7 $ & $26$  & $28$  & $31$  & $9$ & $30$ & $5$   \\
$10^{-2}$ & $10$  & $ 9$ & $10$  & $7$ & $10$  & $ 6$ & $10$  & $10$  & $12$  & $5$ & $10$ & $5$   \\
$10^{-3}$ & $ 2$  & $16$ & $2$  & $5$ & $ 2$  & $ 5$ & $ 3$  & $11$  & $2$  & $4$ & $ 2$ & $5$   \\
$10^{-4}$ & $ 1$  & $33$ & $1$  & $3$ & $ 2$  & $ 1$ & $ 1$  & $34$  & $1$  & $3$ & $ 2$ & $1$   \\
\hline
\end{tabular}
  \end{center}
  \caption{Average number of CG/BiCGstab iterations per FP step for varying $\alpha$. Here, $\beta=0.01$
  ($*$ means that the method does not converge).}\label{table2-1}
\end{table}

In Table \ref{table2-1} the number of iterations is displayed for solving (\ref{fp}) and
(\ref{fpp}) with different BCs and various values of $\alpha$, where $N$ and $I$ mean the number of FP iterations and
no preconditioner, respectively. Here, we only give the average number of CG/BiCGstab iterations per FP step.
Table \ref{table2-1} suggests that the preconditioners $X_D$, with $X \in \{R, M, P\}$, are very effective
matrix approximations for all values of  $\alpha$, while the preconditioners $D_X$ do not work well for large
values of $\alpha$ especially for antireflective BCs. However, for antireflective BCs a good choice for $\alpha$
is in the interval $[10^{-3}, 10^{-2}]$ and in this case both choices $X_D$ and $D_X$ have a similar behaviour.
We note that the number of FP iterations decrease with $\alpha$, so if we have a good model that requires a
lower regularization we obtain a gain also in terms of the computational cost of the whole restoration procedure.
In all our tests, it is shown that $\|g(u^k)\|_2$ tends to $O(10^{-3})$ or $O(10^{-4})$ at the final FP iterate.

\begin{figure}
    \begin{center}
        \epsfig{figure=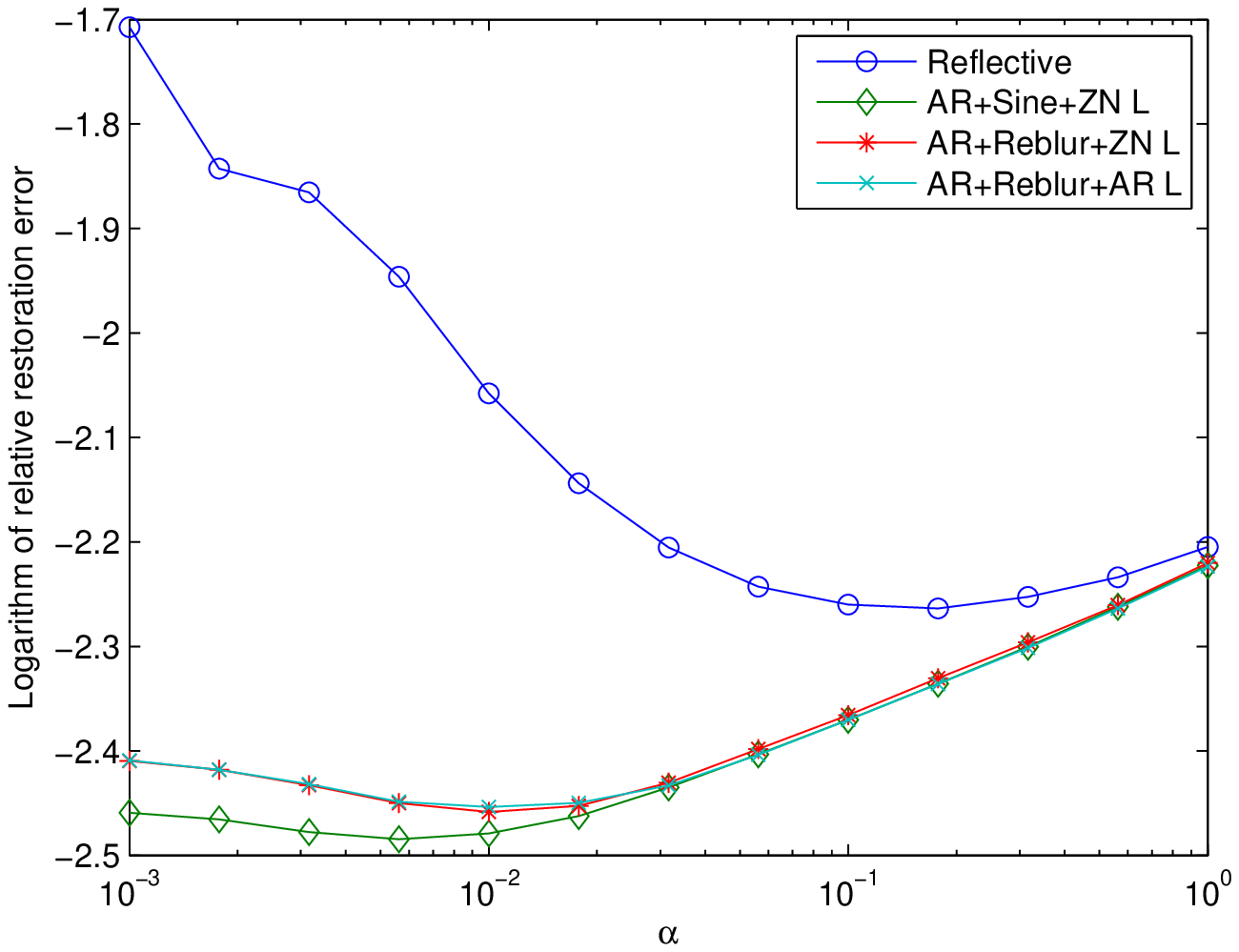,width=9cm}
    \end{center}
\caption{The RRE versus the regularization parameter $\alpha$ for different BCs.} \label{fig:im-tv}
\end{figure}

\begin{figure}
\begin{center}
    \begin{minipage}[c]{6.7cm}
        \centering
        \epsfig{figure=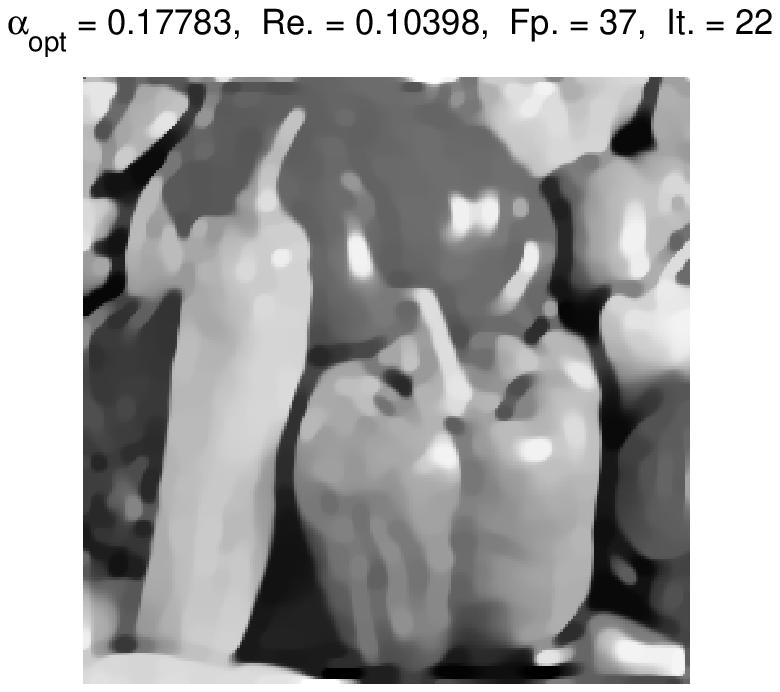,width=6cm}
        \\\vspace{-0.7cm}
        \small{Reflective}
    \end{minipage}\hspace{-1.2cm}
    \begin{minipage}[c]{6cm}
        \centering
        \epsfig{figure=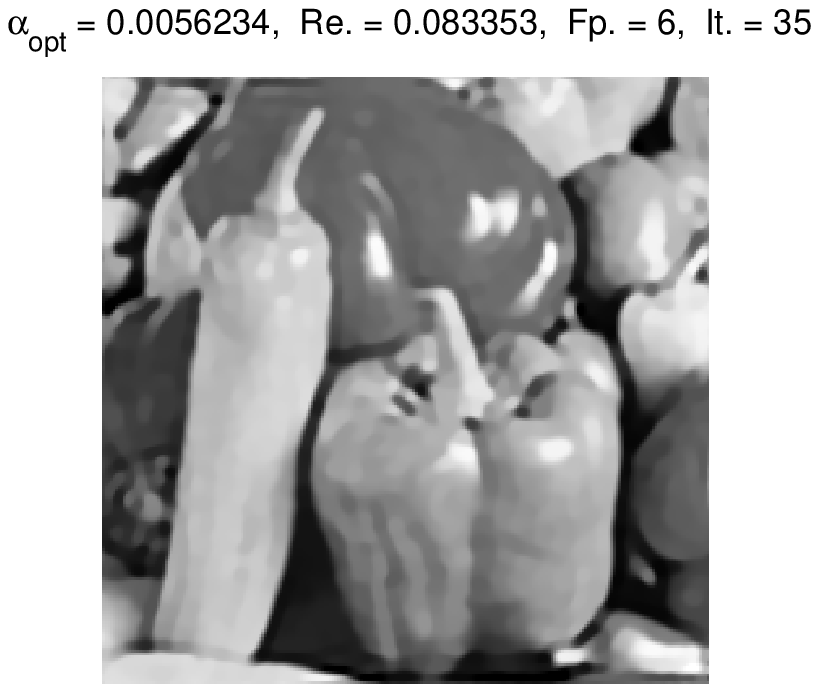,width=6cm}
        \\\vspace{-0.7cm}
        \small{AR+Sine+ZN $\cl$}
    \end{minipage}
\vspace{0.5cm}
\\ \vspace{0.5cm}
    \begin{minipage}[c]{6.7cm}
        \centering
        \epsfig{figure=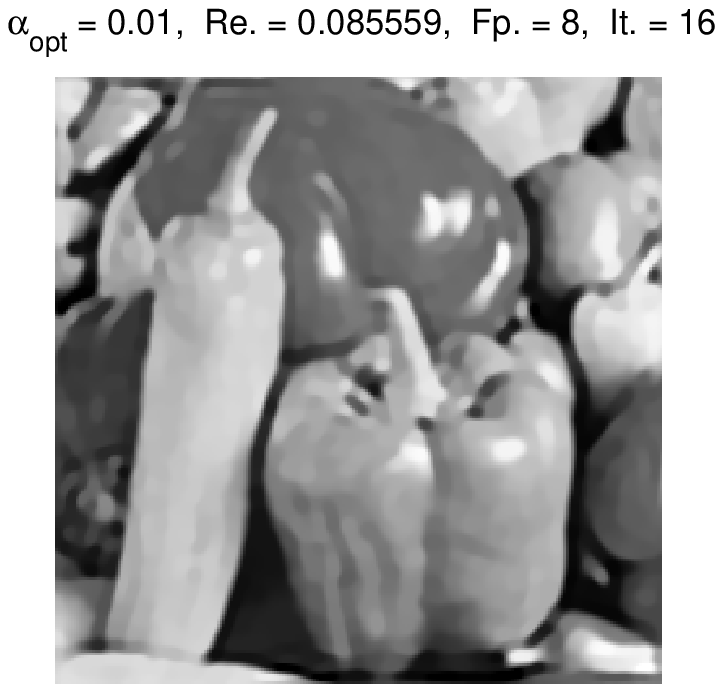,width=6cm}
        \\\vspace{-0.7cm}
        \small{AR+Reblur+ZN $\cl$}
    \end{minipage}\hspace{-1.2cm}
    \begin{minipage}[c]{6cm}
        \centering
        \epsfig{figure=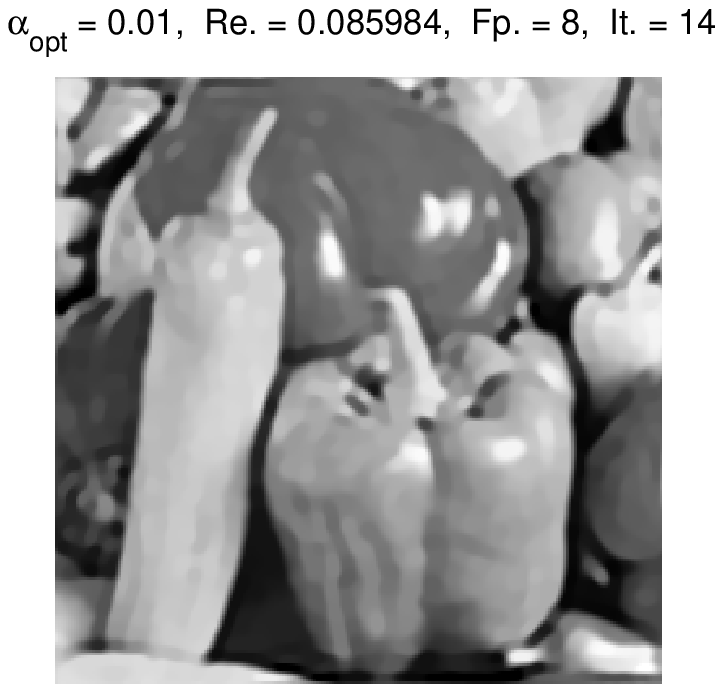,width=6cm}
        \\\vspace{-0.7cm}
        \small{AR+Reblur+AR $\cl$}
    \end{minipage}\hspace{-1.2cm}
\end{center}
\caption{Restored images with different BCs. } \label{fig:res}
\end{figure}

Next, we shall check the quality of restored images by using different BCs. Figure \ref{fig:im-tv} describes
the relative restoration error (RRE) $\|u_\alpha-u\|_2/\|u\|_2$
versus the regularization parameter $\alpha$. Figure  \ref{fig:res} presents
the restored images with optimal value of the parameter $\alpha$.
Like in the 1D case, Figure  \ref{fig:res} shows that the anti-reflective BCs lead
to better restored images and shaper edges with a lower computational cost
than the reflective BCs (note the high reduction in the FP iterations being $\alpha_{\rm opt}$ smaller).

\section{Conclusions}\label{final}

In this paper, we have considered the effect of reflective and anti-reflective BCs, when regularizing
blurred and noisy images via the total variation approach.
In particular, we have studied some preconditioning strategies for the linear systems arising from the
FP iteration given in \cite{VO96}.
In the case of anti-reflective BCs we have also considered a comparison with the
reblurring idea proposed in \cite{DS05}. We recall that the latter has been shown effective
when combined with the Tikhonov regularization and here one of the issues was to verify that reblurring
and total variation can be combined satisfactorily.
Furthermore, the optimal behavior of our preconditioners has been validated numerically.

Beside the computational features of the preconditioning
techniques, we stress the improvement obtained via anti-reflective
BCs both in terms of reconstruction quality and reduction of the computational
cost. In fact, the precision of such BCs was already known in the relevant
literature (see \cite{DES06,ChHa08,perrone} and references there reported).
However, this is the first time that the anti-reflective BCs have been
combined with a sophisticated regularization method, where the use of
fast transforms is very welcome for saving computational cost.
The precision of the antireflective BCs implies also a further reduction
of the computational cost over the other BCs (see numerical results in
Section \ref{num2}) because they require a smaller regularization parameter $\alpha$
and the effectiveness  of the proposed preconditioners increases reducing $\alpha$.

Potential lines of interest for future investigations could include
the use of anti-reflective BCs in the promising split Bregman method
recently proposed  in \cite{GO093} and a more precise clustering analysis of
the preconditioning sequences, in the spirit of Section \ref{sect:spectr}.



\end{document}